\title{Prenex normalization and the hierarchical classification of formulas}
\author{Makoto Fujiwara\footnote{Email: makotofujiwara@rs.tus.ac.jp}
\footnote{Department of Applied Mathematics, Faculty of Science Division I, Tokyo University of Science, 1-3 Kagurazaka, Shinjuku-ku, Tokyo 162-8601, Japan.}
and Taishi Kurahashi\footnote{Email: kurahashi@people.kobe-u.ac.jp}
\footnote{Graduate School of System Informatics,
Kobe University,
1-1 Rokkodai, Nada, Kobe 657-8501, Japan.}}
\date{}
\theoremstyle{plain}
\newtheorem{thm}{Theorem}[section]
\newtheorem*{thm*}{Theorem}
\newtheorem{lem}[thm]{Lemma}
\newtheorem{prop}[thm]{Proposition}
\newtheorem{cor}[thm]{Corollary}
\theoremstyle{defn}
\newtheorem{defn}[thm]{Definition}
\newtheorem{remark}[thm]{Remark}
\newcommand{\FV}{\mathrm{FV}}
\newcommand{\DT}{\rightsquigarrow}
\newcommand{\DTA}{{\rightsquigarrow^*}}
\newcommand{\PNF}{\mathsf{PNF}}
\newcommand{\degree}[1]{\mathit{deg}(#1)}
\newcommand{\alt}[1]{\mathit{Alt}(#1)}
\newcommand{\PA}{\mathsf{PA}}
\newcommand{\HA}{\mathsf{HA}}
\newcommand{\F}{\mathrm{F}}
\newcommand{\PF}{\mathrm{P}}
\newcommand{\U}{\mathrm{U}}
\newcommand{\E}{\mathrm{E}}
\newcommand{\vp}{\varphi}
\newcommand\ang[1]{{\langle #1 \rangle} }
\begin{document}

\maketitle

\begin{abstract}
Akama et al.~\cite{ABHK04} introduced a hierarchical classification of first-order formulas for a hierarchical prenex normal form theorem in semi-classical arithmetic.
In this paper, we give a justification for the hierarchical classification in a general context of first-order theories.
To this end, we first formalize the standard transformation procedure for prenex normalization.
Then we show that the classes $\E_k$ and $\U_k$ introduced in \cite{ABHK04} are exactly the classes induced by $\Sigma_k$ and $\Pi_k$ respectively via the transformation procedure in any first-order theory.
\end{abstract}

\section{Introduction}
We study the prenex normalization of first-order formulas by the standard reduction procedure without any reference to the notion of derivability.
The prenex normal form theorem states that for any first-order theory based on classical logic, every formula is equivalent (over the theory in question) to some formula in prenex normal form (cf.~\cite[pp.~160--161]{End01}).
This theorem is verified by using the fact that several transformations of formulas moving quantifiers in the formula from inside to outside in a suitable way preserve the validity with respect to first-order classical logic (cf.~\cite[pp.~37--38]{Sho01}). 
For example, if $x$ is not contained in $\delta$, then $\forall x \xi(x) \to \delta$ is transformed into $\exists x(\xi(x) \to \delta)$ with preserving classical validity because they are classically equivalent.
For each first-order formula, one can obtain an equivalent formula in prenex normal form by the following procedure:
\begin{enumerate}
\item
Apply the above mentioned transformations finitely many times to the subformulas of the form $A \circ B$ with $A$ and $B$ in prenex normal form where $\circ \in \{ \land, \lor, \to \}$, and transform  the subformulas into equivalent formulas in prenex normal form;
\item
Repeating this procedure until when all  subformulas become to be in prenex normal form.
\end{enumerate}

In contrast, the prenex normal form theorem does not hold for intuitionistic theories. 
For example, $(\forall x \xi(x) \to \delta) \to \exists x(\xi(x) \to \delta)$ is not provable in intuitionistic logic, and then the above procedure does not yield  an intuitionistically equivalent formula in prenex normal form.
Therefore the classical hierarchy of $\Sigma_k$ and $\Pi_k$ formulas, which is based on prenex formulas, does not make sense for intuitionistic theories.
Based on this fact, for intuitionistic theories, several kinds of hierarchical classes corresponding to $\Sigma_k$ and $\Pi_k$ have been introduced and studied from different perspectives respectively.
Some hierarchical classes were studied from the perspective of decidability and computational complexity (cf. \cite{Mints68, SUZ17}), some others were from the perspective of syntactic preservation theorems with respect to Kripke semantics (cf. \cite{Fle10}), and some others (for arithmetic) were from the perspective of proof-theoretic strength (cf. \cite{Burr00}).
A related work can be found in \cite{Leiv81}.
In addition, another approach has been developed recently in \cite{BNI19}.

Among these attempts, Akama, Berardi, Hayashi and Kohlenbach \cite{ABHK04} introduced the classes $\E_k$ and $\U_k$ of formulas corresponding to $\Sigma_k$ and $\Pi_k$ respectively, and argued that a hierarchical prenex normal form theorem for these classes holds for certain theories of semi-classical arithmetic. 
Their classes $\E_k$ and $\U_k$ are non-cumulative, that is, $\E_{k'}$ is not a subclass of $\E_k$ for $k' < k$. 
In \cite{FK21}, the authors introduced the cumulative variants $\E_k^+$ and $\U_k^+$, and corrected the hierarchical prenex normal form theorem argued in \cite{ABHK04} as follows (cf.~\cite[Theorem 5.3]{FK21}): for a $\HA$-formula $\vp$, 
if $\varphi \in \E_k^+$, then there exists a $\varphi' \in \Sigma_k$ such that
	\[
		\HA + \Sigma_k\text{-}\mathbf{DNE} + \U_k\text{-}\mathbf{DNS} \vdash \varphi \leftrightarrow \varphi' ; 
	\]
if $\varphi \in \U_k^+$, then there exists a $\varphi' \in \Pi_k$ such that
	\[
		\HA + (\Pi_k \lor \Pi_k)\text{-}\mathbf{DNE} \vdash \varphi \leftrightarrow \varphi'. 
	\]
In addition, the authors studied in \cite{FK23} the conservation theorems on semi-classical arithmetic with respect to those classes.
The class $\E_k^+$ (resp.~$\U_k^+$) is intended to form the class of formulas which are classically equivalent to some $\Sigma_k$-formula (resp.~$\Pi_k$-formula).
In addition, as mentioned in \cite{ABHK04}, the class $\PF_k$ is intended to represent the set of $\Delta_{k+1}$-formulas, namely, formulas which is equivalent to some $\Sigma_{k+1}$-formula and also to some $\Pi_{k+1}$-formulas.
Note that every formula with quantifier occurrences is classified into exactly one of $\E_{k+1}$, $\U_{k+1}$ and $\PF_{k+1}$
as mentioned in \cite{ABHK04}.

There is, however, some room for discussion on the hierarchical classes.
Firstly, the classical transformation should be distinguished from the equivalence over a classical theory.
In fact, the class of formulas which are transformed into some formula in $\Sigma_{k}$ by the above mentioned procedure is different from the class of formulas which are equivalent to some formula in $\Sigma_{k}$ over a classical theory (cf.~Remark \ref{rem: non-computability of Sk(PA)}).
Secondly, despite the intention behind the definition of the class $\E_k^+$ (resp.~$\U_k^+$), the definition does not exclude the possibility that it does not cover all the formulas which are classically transformed into some $\Sigma_k$-formula (resp.~$\Pi_k$-formula).
Thus a proper justification for the classes is still missing.

Motivated by these issues, in this paper, we give a proper justification for the hierarchical classes.
In particular, we formalize the above mentioned procedure for prenex normalization and investigate the relation between the classes of prenex formulas and the hierarchical classes in \cite{ABHK04, FK21} modulo the transformation procedure.
Although classes $\E_k, \U_k, \F_k, \PF_k, \E_k^+, \U_k^+ $ and $ \F_k^+$ are studied in the context of arithmetic in \cite{ABHK04, FK21}, they can be defined in a general context.
In this paper, we reformulate the classes in a general language of a first-order theory.
Then we first show that a formula is in $\E_k^+$ (resp.~$\U_k^+$) if and only if it can be transformed into a formula in $\Sigma_k^+$ (resp.~$\Pi_k^+$) by the transformation procedure, where $\Sigma_k^+$ and $\Pi_k^+$ are cumulative variants of $\Sigma_k$ and $\Pi_k$ in the general first-order language, respectively.
Then it follows that a formula is in $\F_k^+$ if and only if it can be transformed into a formula in $\Sigma_{k+1}^+$ and also into a formula in $\Pi_{k+1}^+$ by the transformation procedure.
By the results for the cumulative classes, it also follows that non-cumulative classes $\E_k$, $\U_k$ and $\PF_k$ (except $\PF_0$) are the cumulative counterparts of $\Sigma_k$, $\Pi_k$ and $\Delta_{k+1}$ respectively modulo the transformation procedure (cf.~Theorem \ref{thm: main results}).

All of our proofs in this paper are purely syntactic.

\section{Preliminaries}
We work with a standard formulation of first-order theories with all the logical constants $\forall, \exists, \to, \land, \lor$ and $ \perp$ in the language. 
Note that $\neg \varphi$ and $\varphi \leftrightarrow \psi $ are the abbreviations of $(\varphi \to \perp )$ and $(\varphi \to \psi) \land (\psi \to \varphi)$ respectively in our context.
Throughout this paper, let $k$ be a natural number (possibly $0$).
The classes $\Sigma_k$ and $\Pi_k$ are defined as follows (cf.~\cite[pp.~142--143]{CK13}):
\begin{itemize}
	\item Let $\Sigma_0 $ and $ \Pi_0$ be the class of all quantifier-free formulas; 
	\item $\Sigma_{k+1} : = \{\exists x_1, \dots,  x_n \, \varphi \mid \varphi \in \Pi_k\}$;
	\item $\Pi_{k+1} : = \{\forall x_1, \dots, x_n\, \varphi \mid \varphi \in \Sigma_k\}$;
\end{itemize}
where $n\geq 1$.
Let $\FV(\varphi)$ denote the set of all free variables in $\varphi$.
Their cumulative variants $\Sigma_k^+$ and $\Pi_k^+$ are defined as follows:
\begin{itemize}
	\item $ \begin{displaystyle}
 \Sigma_{k}^+ : =\Sigma_{k} \cup \bigcup_{i<k} \Sigma_i \cup  \bigcup_{i<k} \Pi_i ;
 \end{displaystyle}$
\item $ \begin{displaystyle}
  \Pi_{k}^+ : =\Pi_{k} \cup \bigcup_{i<k} \Sigma_i \cup  \bigcup_{i<k} \Pi_i
   \end{displaystyle}$.
\end{itemize}
A formula $\varphi$ is in \textbf{prenex normal form} if $\varphi$ is in $\Sigma_k \cup \Pi_k$ for some $k$.

In the following, we reformulate classes $\E_k, \U_k, \F_k, \PF_k, \E_k^+, \U_k^+ $ and $ \F_k^+$ introduced in \cite{ABHK04, FK21} in our general context (namely, in the language of an arbitrary given first-order theory).
In \cite{ABHK04}, classes $\E_k, \U_k, \F_k $ and $ \PF_k$ are described informally in the context of first-order arithmetic. 
In this paper, we employ the formal definitions given in \cite[Definition 2.11]{FK21}.
Note that our definition of $\PF_0$ is different from that in \cite{ABHK04} where $\PF_0$ is the set of quantifier-free formulas.

An \textbf{alternation path} is a finite sequence of $+$ and $-$ in which $+$ and $-$ appear alternatively.
For an alternation path $s$, let $i(s)$ denote the first symbol of $s$ if $s \not \equiv \ang{\, }$ (empty sequence); $ \times$ if $s \equiv \ang{\, }$.
Let $s^{\perp}$ denote the alternation path which is obtained by switching $+$ and $-$ in $s$, and let $l(s) $ denote the length of $s$.
For a formula $\vp$, the set of alternation paths $\alt{\vp}$ of $\vp$ is defined as follows:
\begin{itemize}
    \item 
    If $\vp$ is quantifier-free, then $\alt{\vp} := \{ \ang{\, } \}$;
    \item
    Otherwise, $\alt{\vp}$ is defined inductively by the following clauses:
    \begin{itemize}
        \item
        If $\vp \equiv \vp_1 \land \vp_2$ or $\vp \equiv \vp_1 \lor \vp_2$, then $\alt{\vp} := \alt{\vp_1} \cup \alt{\vp_2}$;
        \item
        If $\vp \equiv \vp_1 \to \vp_2$, then $\alt{\vp} := \{ s^{\perp} \mid s \in \alt{\vp_1}\} \cup \alt{\vp_2}$;
\item
If $\vp \equiv \forall x \vp_1 $, then $\alt{\vp} :=\{s \mid s\in \alt{\vp_1} \text{ and } i(s)\equiv -\} \cup \{-s \mid s\in \alt{\vp_1} \text{ and } i(s)\not \equiv - \} $;
\item
If $\vp \equiv \exists x \vp_1 $, then $\alt{\vp} :=\{s \mid s\in \alt{\vp_1} \text{ and } i(s)\equiv + \} \cup \{+s \mid s\in \alt{\vp_1} \text{ and } i(s)\not \equiv + \} $.
    \end{itemize}
\end{itemize}
In addition, for a formula $\vp$, the degree $\degree{\vp}$ of $\vp$ is defined as 
$$\degree{\vp} := \max \{l(s) \mid s \in \alt{\vp}  \} .$$

\begin{defn}
\label{def: Classes}
Classes $\F_k, \U_k, \E_k , \PF_k, \F_k^+, \U_k^+ , \E_k^+$ and $\PF_k^+$ are defined as follows:
\begin{itemize}
    \item
    $\F_k := \{ \vp \mid \degree{\vp}=k  \}    ;\,\, \F_k^+ := \{ \vp \mid \degree{\vp}\leq k  \}   ;$
    \item
    $\U_0:=\E_0:=\F_0 \, (=\Sigma_0 =\Pi_0)$;
    \item
$\U_{k+1} := \{ \vp \in \F_{k+1} \mid i(s) \equiv - \text{ for all }s\in \alt{\vp} \text{ such that }l(s) =k+1 \}$;
\item
$\E_{k+1} := \{ \vp \in \F_{k+1} \mid i(s) \equiv + \text{ for all }s\in \alt{\vp} \text{ such that }l(s) =k+1  \}$;
\item
$\PF_k := \F_k\setminus (\E_k \cup \U_k)$ \, (note that $\PF_0 =\emptyset$);
    \item
$\begin{displaystyle}
\U_{k}^+ := \U_{k} \cup \bigcup_{i<k} \F_i ;\, \, \E_{k}^+ := \E_{k} \cup \bigcup_{i<k} \F_i
\end{displaystyle}
$.
\end{itemize}
\end{defn}

\begin{remark}
Every formula with quantifier occurrences is classified into exactly one of $\E_{k+1}$, $\U_{k+1}$ and $\PF_{k+1}$ for some $k$.
\end{remark}

The distinction between $\E_k$ and $\E_{k}^+$ (as well as that for $\U_k$ and $\U_{k}^+$) is normally redundant since they are equivalent over a standard theory (cf.~\cite[Lemma 4.6]{FK21}).
For our investigation, however, the distinction is crucial because we focus on forms of formulas without mentioning any derivability relation.

\begin{lem}\label{lem: Classification}
\noindent
\begin{enumerate}
\item
\label{item: Ek+1 cap Uk+1+ = empty}
$\E_{k+1} \cap \U_{k+1}^+ =\U_{k+1} \cap \E_{k+1}^+ = \emptyset$.
\item
\label{item: Fk+=}
$\F_k^+ = \E_{k+1}^+ \cap \U_{k+1}^+ = \PF_{k} \cup \E_{k}^+ \cup \U_{k}^+$.
\end{enumerate}
\end{lem}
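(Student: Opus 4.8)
The plan is to reduce both statements to elementary set-theoretic manipulations, relying on two structural facts about the degree function: first, that $\degree{\varphi}$ is a single well-defined value for each $\varphi$, so that the classes $\F_i$ are pairwise disjoint and $\F_k^+ = \bigcup_{i\leq k}\F_i$; and second, that for a nonempty alternation path $s$ the symbol $i(s)$ is uniquely either $+$ or $-$, never both.

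For item 1, I would first isolate the auxiliary disjointness $\E_{k+1}\cap\U_{k+1}=\emptyset$. Given $\varphi\in\F_{k+1}$, the maximum in $\degree{\varphi}=\max\{l(s)\mid s\in\alt{\varphi}\}$ is attained, so there is at least one $s\in\alt{\varphi}$ with $l(s)=k+1$. Membership in $\E_{k+1}$ forces $i(s)\equiv +$ for this particular $s$, whereas membership in $\U_{k+1}$ forces $i(s)\equiv -$; since $s$ is nonempty these are incompatible, which yields the disjointness. To finish, I would unfold $\U_{k+1}^+=\U_{k+1}\cup\bigcup_{i\leq k}\F_i$ and observe that $\E_{k+1}\subseteq\F_{k+1}$ is disjoint from every $\F_i$ with $i\leq k$ by uniqueness of degree; hence $\E_{k+1}\cap\U_{k+1}^+=\E_{k+1}\cap\U_{k+1}=\emptyset$. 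The identity $\U_{k+1}\cap\E_{k+1}^+=\emptyset$ then follows by the symmetric computation, swapping the roles of $+$ and $-$.

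For item 2, I would argue by direct set algebra. For the first equality, write $\E_{k+1}^+=\E_{k+1}\cup\F_k^+$ and $\U_{k+1}^+=\U_{k+1}\cup\F_k^+$ using $\bigcup_{i\leq k}\F_i=\F_k^+$, distribute the intersection into the four terms $\E_{k+1}\cap\U_{k+1}$, $\E_{k+1}\cap\F_k^+$, $\F_k^+\cap\U_{k+1}$, and $\F_k^+\cap\F_k^+$, and discard the first three: they are empty by disjointness of the $\F_i$ together with the already established $\E_{k+1}\cap\U_{k+1}=\emptyset$. What remains is $\F_k^+$. For the second equality, expand $\PF_k\cup\E_k^+\cup\U_k^+=\bigl(\F_k\setminus(\E_k\cup\U_k)\bigr)\cup(\E_k\cup\U_k)\cup\bigcup_{i<k}\F_i$, and use $\E_k\cup\U_k\subseteq\F_k$ to collapse the first two terms to $\F_k$, leaving $\F_k\cup\bigcup_{i<k}\F_i=\F_k^+$.

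None of these steps is genuinely hard; the only point requiring care is the attainment of a maximal-length path in item 1, since this is exactly what prevents $\E_{k+1}$ and $\U_{k+1}$ from both being satisfied vacuously. I would also verify the boundary case $k=0$ separately, where $\E_0=\U_0=\F_0$ and $\PF_0=\emptyset$, to confirm that the set-algebra identities degenerate correctly.
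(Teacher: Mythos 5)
Your proposal is correct and follows essentially the same route as the paper: both reduce item 1 to the disjointness $\E_{k+1} \cap \U_{k+1} = \emptyset$ via uniqueness of $\degree{\vp}$ together with the attainment of a path of maximal length $k+1$ (forcing $i(s)$ to be both $+$ and $-$), and both settle item 2 by elementary set algebra from item 1. The only cosmetic difference is that you distribute the intersection $\E_{k+1}^+ \cap \U_{k+1}^+$ directly into four terms, whereas the paper phrases the same computation as a contradiction from a proper inclusion.
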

\begin{proof}
\eqref{item: Ek+1 cap Uk+1+ = empty}:
Suppose that $\vp \in \E_{k+1} \cap \U_{k+1}^+$.
Since $\vp \in \E_{k+1}$, we have $\vp \in \F_{k+1}$ and $\degree{\vp} =k+1 $, and hence, $\vp\in \U_{k+1}$.
Then there exists $s\in \alt{\vp}$ such that $l(s)=k+1$ and $i(s)=-$, which contradicts $\vp \in \E_{k+1}$.
Thus we have shown that $\E_{k+1} \cap \U_{k+1}^+ =\emptyset$.
The proof of $\U_{k+1} \cap \E_{k+1}^+ = \emptyset$ is similar.

\eqref{item: Fk+=}:
$\F_k^+ \subseteq \E_{k+1}^+ \cap \U_{k+1}^+$ is trivial by definition.
If the inclusion is proper, we have $(\E_{k+1}^+ \cap \U_{k+1}^+)\setminus \F_k^+ \neq \emptyset$, and hence, $\E_{k+1} \cap \U_{k+1} \neq \emptyset$, which contradicts \eqref{item: Ek+1 cap Uk+1+ = empty}.
Thus we have shown that $\F_k^+ = \E_{k+1}^+ \cap \U_{k+1}^+$.
The equality $\F_k^+ = \PF_{k} \cup \E_{k}^+ \cup \U_{k}^+$ is trivial.
\end{proof}

The following lemma is the reformulation of \cite[Lemma 4.5]{FK21} in our general context.
The proof is exactly the same as for \cite[Lemma 4.5]{FK21}.
\begin{lem}
\label{lem: basic facts on our classes}
The following hold for all $k$.
\begin{enumerate}
    \item
    \label{item: EU and}
    A formula $\vp_1 \land \vp_2$ is in $\U_k^+$ (resp. $\E_k^+$) if and only if both of $\vp_1 $ and $\vp_2$ are in  $\U_k^+$ (resp. $\E_k^+$).
    \item
        \label{item: EU or}
    A formula $\vp_1 \lor \vp_2$ is in $\U_k^+$ (resp. $\E_k^+$) if and only if both of $\vp_1 $ and $\vp_2$ are in  $\U_k^+$ (resp. $\E_k^+$).
    \item
            \label{item: EU to}
A formula $\vp_1 \to \vp_2$ is in $\U_k^+$ (resp. $\E_k^+$) if and only if $\vp_1 $ is in $\E_k^+$ (resp. $\U_k^+$) and $\vp_2$ is in  $\U_k^+$ (resp. $\E_k^+$).
 
 \item
         \label{item: U forall}
A formula $\forall x \vp_1$ is in $\U_k^+$ if and only if $\vp_1$ is in $\U_k^+$.
 \item
          \label{item: E exists}
A formula $\exists x \vp_1$ is in $\E_k^+$ if and only if  $\vp_1$ is in $\E_k^+$.
  \item
         \label{item: E forall}
A formula $\forall x \vp_1$ is in $\E_{k+1}^+$ if and only if it is  in $\U_k^+$.
   \item
            \label{item: U exists}
A formula $\exists x \vp_1$ is in $\U_{k+1}^+$ if and only if it is in $\E_k^+$.
\end{enumerate}
\end{lem}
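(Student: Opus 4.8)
The plan is to translate each clause into an equivalent condition on the alternation-path set $\alt{\vp}$ and then read it off from the inductive definition of $\alt{\cdot}$. The basic tool is the following path-level description, immediate from Definition \ref{def: Classes}: for $k \geq 1$, a formula $\vp$ lies in $\U_k^+$ exactly when $l(s) \leq k$ for every $s \in \alt{\vp}$ and, in addition, $i(s) \equiv -$ for every $s \in \alt{\vp}$ with $l(s) = k$; dually $\vp \in \E_k^+$ exactly when $l(s) \leq k$ for every such $s$ and $i(s) \equiv +$ whenever $l(s) = k$. (Membership with $\degree{\vp} < k$ is absorbed, since then no path attains length $k$.) The base case is separate and immediate, as $\U_0^+ = \E_0^+ = \F_0$ is just the class of quantifier-free formulas; I would dispose of it first for each clause.

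For the propositional clauses \eqref{item: EU and}--\eqref{item: EU to} the translation is mechanical. For $\land$ and $\lor$ we have $\alt{\vp_1 \land \vp_2} = \alt{\vp_1 \lor \vp_2} = \alt{\vp_1} \cup \alt{\vp_2}$, and the path-level condition above is universally quantified over paths, so it holds for a union precisely when it holds for each member; this yields \eqref{item: EU and} and \eqref{item: EU or} simultaneously. For $\to$ we have $\alt{\vp_1 \to \vp_2} = \{ s^{\perp} \mid s \in \alt{\vp_1}\} \cup \alt{\vp_2}$, and since $l(s^{\perp}) = l(s)$ while $i(s^{\perp})$ is the switch of $i(s)$, the $\U_k^+$-condition imposed on the block $\{ s^{\perp} \mid s \in \alt{\vp_1}\}$ is verbatim the $\E_k^+$-condition on $\alt{\vp_1}$ (and vice versa). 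This is exactly the interchange of $\E_k^+$ and $\U_k^+$ on the antecedent asserted in \eqref{item: EU to}.

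For the quantifier clauses I would use $\alt{\forall x \vp_1} = \{ s \mid s \in \alt{\vp_1},\ i(s) \equiv - \} \cup \{ -s \mid s \in \alt{\vp_1},\ i(s) \not\equiv - \}$ together with its $\exists$-dual. Clauses \eqref{item: U forall} and \eqref{item: E exists} require, for $k \geq 1$, only careful length-bookkeeping: prepending $-$ lengthens a path by one, but if $\alt{\vp_1}$ meets the $\U_k^+$-condition then every $s$ with $i(s) \not\equiv -$ already satisfies $l(s) < k$ (a length-$k$ path would have to begin with $-$), so the lengthened path $-s$ still has length $\leq k$ and begins with $-$; the converse direction reads the same bound off from $l(-s) = l(s) + 1 \leq k$. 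Thus the $\U_k^+$-condition transfers back and forth between $\alt{\forall x \vp_1}$ and $\alt{\vp_1}$. Clauses \eqref{item: E forall} and \eqref{item: U exists} I would settle by the sharper observation that, by the $\forall$-clause, \emph{every} path in $\alt{\forall x \vp_1}$ begins with $-$ (retained paths do so by the case condition, prepended ones trivially). Consequently $\alt{\forall x \vp_1}$ can contain no length-$(k+1)$ path compatible with $\E_{k+1}^+$ (such a path would have to begin with $+$), and the sign constraint defining $\U_k^+$ on length-$k$ paths is met automatically; both memberships therefore reduce to the single bound $\degree{\forall x \vp_1} \leq k$ and so coincide. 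Clause \eqref{item: U exists} is the mirror image with ``$+$'' in place of ``$-$''.

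The only steps needing genuine care are the quantifier cases, where the map on paths is not merely a relabelling but changes lengths exactly when it prepends a symbol; the equivalences therefore hinge on the interaction between the length threshold and the first-symbol constraint rather than on a formal substitution. The one idea beyond routine case analysis is the remark used for \eqref{item: E forall} and \eqref{item: U exists}, namely that a universally (resp.\ existentially) quantified formula has all alternation paths beginning with $-$ (resp.\ $+$); once this is in hand, the two memberships there both reduce to a degree bound and the biconditionals are immediate.
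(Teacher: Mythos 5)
Your argument is correct in substance, and it is essentially the proof the paper has in mind: the paper does not prove this lemma inline but imports it from \cite[Lemma 4.5]{FK21}, whose proof proceeds, as yours does, by unfolding the path-based definitions of $\E_k^+$ and $\U_k^+$ and computing directly with $\alt{\cdot}$. Your three key computations are all sound: the universally quantified path condition distributes over the unions arising from $\land$, $\lor$ and $\to$ (with $(\cdot)^{\perp}$ preserving length and switching the first symbol, which is exactly the $\E/\U$ swap in item \eqref{item: EU to}); the length bookkeeping separating retained paths from paths prepended with $-$ (resp.\ $+$) gives items \eqref{item: U forall} and \eqref{item: E exists} for $k \ge 1$; and the observation that every path of $\alt{\forall x\, \vp_1}$ begins with $-$ (resp.\ every path of $\alt{\exists x\, \vp_1}$ begins with $+$) correctly collapses both sides of items \eqref{item: E forall} and \eqref{item: U exists} to the single bound $\degree{\forall x\, \vp_1} \le k$, which also handles $k=0$ there, since both sides are then false because $\degree{\forall x\, \vp_1} \ge 1$.

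The one inaccurate step is your blanket claim that the $k=0$ base case is ``immediate'' for each clause. For items \eqref{item: U forall} and \eqref{item: E exists} at $k=0$ there is nothing to dispose of, because those statements are then literally false: $\U_0^+ = \E_0^+ = \F_0$ is the class of quantifier-free formulas, so for atomic $P(x)$ one has $P(x) \in \U_0^+$ while $\forall x\, P(x) \in \U_1$, hence $\forall x\, P(x) \notin \U_0^+$. This is an edge-case slip in the lemma's own phrase ``for all $k$'' rather than a defect of your method --- the paper only ever invokes these two items with $k \ge 1$ (cf.\ ``We may assume that $k \ge 1$'' in the proof of Theorem \ref{MT1}, and the step forcing $k \ge 2$ in the proof of Lemma \ref{LemA}) --- but your writeup should state explicitly that items \eqref{item: U forall} and \eqref{item: E exists} hold for $k \ge 1$ and fail at $k=0$, instead of asserting the base case is immediate. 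With that restriction made explicit, your proof is complete.
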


\begin{remark}
    \label{rem: non-computability of Sk(PA)}
    In contrast to that the classes in Definition \ref{def: Classes} are computable, arithmetical classes defined by using provable equivalence are not computable in general.
    For example, for each natural number $k$,
    $$\Sigma_k(\PA):=\{\vp \mid \text{there exists $\psi\in \Sigma_k$ such that $\FV (\vp) =\FV (\psi)$ and $\PA \vdash \vp \leftrightarrow \psi $} \}$$
is not computable:
Suppose that $\Sigma_k(\PA)$ is computable.
Then there exists a formula $\tau(x)$ such that
\begin{itemize}
\item
$\vp\in \Sigma_k(\PA)$ implies $\PA \vdash \tau (\ulcorner\vp\urcorner )$;
\item
$\vp\notin \Sigma_k(\PA)$ implies $\PA \vdash \neg \tau (\ulcorner\vp\urcorner )$;
\end{itemize}
where $\ulcorner\vp\urcorner $ is the G\"odel number of $\vp$.
Fix a sentence $\psi \notin \Sigma_k(\PA)$.
By the fixed point theorem (cf.~\cite[p.~54]{Boo93}), there exists a sentence $\xi$ such that
\begin{equation}
    \label{eq: u}
    \PA \vdash \xi \leftrightarrow \tau ( \ulcorner \xi \land \psi \urcorner ).
\end{equation}

Suppose $\xi \land \psi \in  \Sigma_k(\PA)$.
Then $\PA \vdash \tau (\ulcorner\xi \land \psi \urcorner )$.
By \eqref{eq: u}, we have $\PA \vdash \xi$, and hence, $\PA \vdash \psi \leftrightarrow \xi \land \psi$.
Since $\psi \notin \Sigma_k(\PA)$, we have $\xi \land \psi \notin \Sigma_k(\PA)$, which is a contradiction.

Suppose $\xi \land \psi \notin  \Sigma_k(\PA)$.
Then $\PA \vdash \neg \tau (\ulcorner\xi \land \psi \urcorner )$.
By \eqref{eq: u}, we have $\PA \vdash \neg \xi$, and hence, $\PA \vdash {0=1} \leftrightarrow \xi \land \psi$.
Then we have $\xi \land \psi \in \Sigma_k(\PA)$, which is a contradiction.
\end{remark}

\section{Prenex normalization}
In this section, we formalize the notion of prenex transformation and study the basic property of the transformation.
Then we show a basic theorem (cf.~Theorem \ref{thm: Prenex normalization}) on prenex normalization with respect to the transformation, which will be investigated in more detail in the next section.

\begin{defn}\label{DT}
Let $\varphi$ and $\psi$ be any formulas. 
We say that $\psi$ is a \textit{prenex transformation} of $\varphi$, written as $\varphi \DT \psi$, if $\varphi$ and $\psi$ match one of the eight rows in the following table: for some formulas $\xi$ and $\delta$, variables $x \notin \FV(\delta)$ and $y$ where $y$ does not appear in $\xi$, and a quantifier $Q\in \{\forall, \exists \}$.
\begin{table}[ht]
\begin{center}
\begin{tabular}{|l|c|c|}
\hline
& $\varphi$ & $\psi$ \\
\hline
\hline
1& $\exists x \xi(x) \to \delta$ & $\forall x(\xi(x) \to \delta)$ \\
\hline
2& $\forall x \xi(x) \to \delta$ & $\exists x(\xi(x) \to \delta)$ \\
\hline
3-$Q$ & $\delta \to Q x\, \xi(x)$ & $Q x\, (\delta \to \xi(x))$ \\
\hline
4-$Q$& $Q x \, \xi(x) \land \delta$ & $Q x\, (\xi(x) \land \delta)$ \\
\hline
5-$Q$& $\delta \land Q x \, \xi(x)$ & $Q x\, (\delta \land \xi(x))$ \\
\hline
6-$Q$& $Q x \, \xi(x) \lor \delta$ & $Q x\, (\xi(x) \lor \delta)$ \\
\hline
7-$Q$& $\delta \lor Q x \, \xi(x)$ & $Q x\, (\delta \lor \xi(x))$ \\
\hline
8-$Q$& $Q x \, \xi(x)$ & $Q y \, \xi(y)$ \\
\hline
\end{tabular}
\end{center}
\end{table}
\end{defn}

\begin{remark}
Note that $\varphi$ and $\psi$ such that  $\varphi \DT \psi$ are equivalent over first-order classical logic.
Thus $\DT$ is a classically valid transformation.
On the other hand, intuitionistic logic does not admit translations 2 and 3-$\exists$ as well as the converses of 6-$\forall$ and 7-$\forall$.
\end{remark}

\begin{defn}
We write $\varphi \DTA \psi$ if $\psi$ is obtained from $\varphi$ by repeating prenex transformations finitely many times to a subformula recursively.
More formally, we write $\varphi \DTA \psi$ if there exist a natural number $k$ and finite sequences $\varphi_0, \ldots, \varphi_{k-1}, \varphi_k$, $\xi_0, \ldots, \xi_{k-1}$, and $\delta_0, \ldots, \delta_{k-1}$ of formulas such that $\varphi \equiv \varphi_0$, $\psi \equiv \varphi_k$, and for any $i < k$, $\xi_i \DT \delta_i$ and $\varphi_{i+1}$ is obtained by replacing an occurrence of a subformula $\xi_i$ in $\varphi_i$ with $\delta_i$. 
\end{defn}

The following propositions are trivial. 

\begin{prop}
\label{prop: DTA is reflexive and transitive}
The binary relation $\DTA$ is reflexive and transitive. 
\end{prop}

\begin{prop}
\label{prop: basic properties of DTA}
Suppose $\varphi \DTA \psi$. 
Then, 
\begin{enumerate}
	\item
 \label{item: DTA-FV}
 $\FV(\varphi) = \FV(\psi)$. 
	\item
 \label{item: DTA-NQ}
 $\varphi$ and $\psi$ have the same number of quantifiers. 
	\item
 \label{item: DTA-NLC}
 $\varphi$ and $\psi$ have the same number of logical connectives. 
\end{enumerate}
\end{prop}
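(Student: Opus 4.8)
The plan is to reduce the statement for $\DTA$ to the corresponding statement for a single prenex transformation $\DT$ and then lift it along the defining sequence by a routine induction. The first observation I would make is that all three quantities in question---the free-variable set, the number of quantifier occurrences, and the number of occurrences of logical connectives---behave additively when a formula is decomposed into a surrounding context and a distinguished subformula, so the three parts of the proposition are all governed by the same mechanism and can be handled in parallel.

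The base case is to check that a single prenex transformation preserves each invariant, i.e.\ that $\xi_i \DT \delta_i$ implies $\FV(\xi_i) = \FV(\delta_i)$ and that $\xi_i$ and $\delta_i$ contain the same number of quantifiers and the same number of logical connectives. This is a finite case analysis over the eight rows of the table in Definition~\ref{DT}. In each of rows 1--7, exactly one quantifier and exactly one binary connective are displayed on each side while the remaining material $\xi$ and $\delta$ is merely rearranged, so the quantifier count and the connective count are visibly equal; for the free variables, the side condition $x \notin \FV(\delta)$ is precisely what guarantees that moving $Q x$ past $\delta$ neither creates nor destroys a free occurrence. In row 8 the numbers of quantifiers and connectives are unaffected by renaming, and since $y$ does not occur in $\xi$ we have $\FV(Q y\,\xi(y)) = \FV(\xi(y)) \setminus \{y\} = \FV(\xi(x)) \setminus \{x\} = \FV(Q x\,\xi(x))$.

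The inductive step is to show that replacing a single occurrence of a subformula $\xi_i$ by $\delta_i$ inside $\varphi_i$ preserves each invariant, given that $\xi_i$ and $\delta_i$ already agree on it by the base case. For the quantifier count and the connective count this is immediate, since each total is the sum of the contribution of the context and the contribution of the replaced subformula, and only the latter changes, namely by zero. For the free-variable set I would use the decomposition $\FV(C[\alpha]) = F_C \cup (\FV(\alpha) \setminus B_C)$, where $C[\cdot]$ denotes the context with a hole, $F_C$ collects the free variables occurring in $C$ outside the hole, and $B_C$ is the set of variables whose quantifier scope in $C$ contains the hole; from $\FV(\xi_i) = \FV(\delta_i)$ we get $\FV(\xi_i)\setminus B_C = \FV(\delta_i)\setminus B_C$, whence $\FV(\varphi_i) = \FV(\varphi_{i+1})$. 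Chaining these equalities along the sequence $\varphi \equiv \varphi_0, \dots, \varphi_k \equiv \psi$ by transitivity of equality then yields all three claims for $\DTA$.

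The only point requiring slight care---and the single plausible candidate for a main obstacle, though it stays routine---is the free-variable bookkeeping: both the renaming of row 8 and the context decomposition involve potential variable capture, and one must invoke exactly the freshness side conditions ($x \notin \FV(\delta)$ and $y$ not occurring in $\xi$) to rule it out. Everything else reduces to additive counting, which is why the proposition is justifiably called trivial.
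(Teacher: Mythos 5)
Your proof is correct, and since the paper gives no argument at all here (it simply declares this proposition trivial), your double reduction---a case check over the eight rows of Definition~\ref{DT}, then lifting through a single subformula replacement and chaining along the sequence $\varphi_0,\dots,\varphi_k$---is exactly the routine argument the authors intend the reader to supply. Your handling of the two delicate points (the side condition $x \notin \FV(\delta)$ in rows 1--7 and the freshness of $y$ in row 8, plus the context decomposition $\FV(C[\alpha]) = F_C \cup (\FV(\alpha)\setminus B_C)$ with $F_C$, $B_C$ unchanged under replacement) is accurate and complete.
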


\begin{prop}
\label{prop: DTA is classically valid}
If $\varphi \DTA \psi$, then $\varphi$ is equivalent to $\psi$ in classical first-order predicate logic. 
\end{prop}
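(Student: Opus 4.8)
The plan is to prove the statement by induction on the definition of $\DTA$. Recall that $\varphi \DTA \psi$ means there is a finite chain $\varphi \equiv \varphi_0, \varphi_1, \dots, \varphi_k \equiv \psi$ in which each $\varphi_{i+1}$ arises from $\varphi_i$ by replacing an occurrence of a subformula $\xi_i$ with $\delta_i$, where $\xi_i \DT \delta_i$. Since classical equivalence is transitive, it suffices to treat a single step, namely to show that whenever $\varphi_{i+1}$ is obtained from $\varphi_i$ by replacing one subformula $\xi_i$ with $\delta_i$ satisfying $\xi_i \DT \delta_i$, the formulas $\varphi_i$ and $\varphi_{i+1}$ are classically equivalent. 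The full statement then follows by composing these single-step equivalences along the chain, using reflexivity for the base case $k=0$.

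For the single step I would proceed in two parts. First, I would establish the \emph{base equivalence}: for each of the eight rows of the table in Definition~\ref{DT}, the formula $\xi_i$ is classically equivalent to $\delta_i$. These are precisely the standard quantifier-movement equivalences, and as the Remark following Definition~\ref{DT} already observes, each row is a classically valid transformation; in particular rows~1,~2 and~3-$Q$ rely on the side condition $x \notin \FV(\delta)$, while row~8-$Q$ is just renaming of a bound variable by a fresh $y$ not occurring in $\xi$. Second, I would invoke the \emph{replacement (substitutivity) property} of classical predicate logic: if $\xi_i$ and $\delta_i$ are classically equivalent, then replacing an occurrence of $\xi_i$ inside any larger formula $\varphi_i$ by $\delta_i$ yields a formula $\varphi_{i+1}$ classically equivalent to $\varphi_i$. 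This is a routine structural induction on the position of the replaced subformula, where each logical connective and quantifier preserves equivalence of its immediate subformulas.

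The main obstacle, and the only point requiring genuine care, is the interaction between bound-variable capture and the replacement step when the occurrence of $\xi_i$ lies in the scope of quantifiers of $\varphi_i$. One must ensure that the free variables of $\xi_i$ and $\delta_i$ behave identically under the surrounding binders; here Proposition~\ref{prop: basic properties of DTA}\eqref{item: DTA-FV} is reassuring since $\FV(\xi_i) = \FV(\delta_i)$ for each table row, so no variable that is free in one formula becomes captured differently in the other. With this observation the structural induction for substitutivity goes through cleanly. Everything else—the eight base equivalences and the transitivity composition—is entirely routine and needs no further comment.
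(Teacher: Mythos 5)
Your proof is correct and is precisely the routine argument the paper has in mind when it declares this proposition trivial: single-step classical equivalence for each row of Definition~\ref{DT} (using the side conditions $x \notin \FV(\delta)$ and freshness of $y$), the replacement-of-equivalents property of classical predicate logic, and composition along the chain by transitivity. Your attention to variable capture is sound, though note that the per-row fact $\FV(\xi_i) = \FV(\delta_i)$ is directly verifiable from the table and is what underlies Proposition~\ref{prop: basic properties of DTA}.\eqref{item: DTA-FV}, rather than something to be imported from it.
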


\begin{lem}\label{DTA}
If $\psi$ is obtained by replacing an occurrence of a subformula $\xi$ in $\varphi$ with $\delta$ such that $\xi \DTA \delta$, then $\varphi \DTA \psi$. 
\end{lem}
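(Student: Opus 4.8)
The plan is to induct on the length $n$ of the derivation witnessing the hypothesis $\xi \DTA \delta$, using that $\DTA$ is reflexive and transitive (Proposition \ref{prop: DTA is reflexive and transitive}). First I would fix the occurrence of $\xi$ in $\varphi$ and write $\varphi \equiv C[\xi]$ for the associated context $C$, so that by definition $\psi \equiv C[\delta]$. By the definition of $\DTA$, the hypothesis supplies a sequence $\xi \equiv \theta_0, \theta_1, \ldots, \theta_n \equiv \delta$ such that, for each $i < n$, the formula $\theta_{i+1}$ is obtained from $\theta_i$ by replacing an occurrence of some subformula $\mu_i$ with $\nu_i$, where $\mu_i \DT \nu_i$. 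The statement to be proved by induction, for this fixed $C$ and $\xi$, is that $C[\theta_0] \DTA C[\theta_n]$.

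For the base case $n = 0$ we have $\delta \equiv \xi$, hence $\psi \equiv \varphi$, and $\varphi \DTA \psi$ follows immediately from reflexivity. For the inductive step I would split off the last transformation: the initial segment $\theta_0, \ldots, \theta_n$ witnesses a derivation of length $n$, so the induction hypothesis gives $\varphi \equiv C[\theta_0] \DTA C[\theta_n]$. It then remains to pass from $C[\theta_n]$ to $C[\theta_{n+1}] \equiv \psi$ in a single contextual step and invoke transitivity.

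The key observation, which is the only real content of the argument, is that a single prenex transformation performed on a subformula of $\theta_n$ lifts to a single prenex transformation performed on the same subformula viewed inside $C[\theta_n]$. Since $\theta_n$ occupies the fixed slot of the context $C$ and $\mu_n$ is a subformula of $\theta_n$, the chosen occurrence of $\mu_n$ in $\theta_n$ determines a unique occurrence of $\mu_n$ in $C[\theta_n]$; replacing that occurrence with $\nu_n$ yields exactly $C[\theta_{n+1}]$. Taking the one-step instance ($k = 1$) of the definition of $\DTA$, with $\mu_n \DT \nu_n$, therefore gives $C[\theta_n] \DTA C[\theta_{n+1}]$, and transitivity completes the induction.

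The step I expect to require the most care is precisely this occurrence-bookkeeping: one must check that an occurrence of a subformula of a subformula is unambiguously an occurrence of a subformula of the whole formula, so that each inner replacement corresponds to a legitimate single replacement in the ambient formula $C[\theta_n]$. Once this compositionality of subformula occurrences under the fixed context $C$ is granted, everything else is routine, with reflexivity and transitivity supplied by Proposition \ref{prop: DTA is reflexive and transitive}.
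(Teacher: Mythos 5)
Your proof is correct and takes essentially the same approach as the paper's: both arguments rest on the key observation that a single prenex transformation applied to a subformula $\mu_i$ of $\theta_i$ lifts to a single legitimate replacement in the ambient formula $C[\theta_i]$. The only cosmetic difference is that you glue the lifted steps together via induction on the derivation length and transitivity of $\DTA$, whereas the paper constructs the lifted witnessing sequence $\varphi_0, \ldots, \varphi_k$ explicitly in one pass (verifying the same occurrence-bookkeeping by an inner induction).
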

\begin{proof}
Fix an occurrence of a subformula $\xi$ in $\varphi$.
Since $\xi \DTA \delta$, there exist a natural number $k$ and finite sequences $\xi_0, \ldots, \xi_{k-1}, \xi_k$, $\rho_0, \ldots, \rho_{k-1}$, and $\eta_0, \ldots, \eta_{k-1}$ such that $\xi \equiv \xi_0$, $\delta \equiv \xi_k$, and for any $i < k$, $\rho_i \DT \eta_i$ and $\xi_{i+1}$ is obtained by replacing an occurrence of $\rho_i$ in $\xi_i$ with $\eta_i$. 

We define a finite sequence $\varphi_0, \ldots, \varphi_k$ of formulas inductively as follows: 
\begin{itemize}
	\item $\varphi_0 : \equiv \varphi$. 
	\item $\varphi_{i+1}$ is obtained by replacing the occurrence of $\xi_i$ in $\varphi_i$ with $\xi_{i+1}$. 
\end{itemize}
Since $\xi_{i+1}$ is obtained by replacing the occurrence of $\rho_i$ in $\xi_i$ with $\eta_i$, we find that $\varphi_{i+1}$ is also obtained by replacing the occurrence of $\rho_i$ in $\varphi_i$ with $\eta_i$. 

Also, one can show by induction that for each $i < k$, $\varphi_{i+1}$ is obtained from $\varphi$ by replacing the occurrence of $\xi$ in $\varphi$ with $\xi_{i+1}$. 
Then, we have $\varphi_k \equiv \psi$ because $\xi_k \equiv \delta$. 
Therefore, we obtain $\varphi \DTA \psi$. 
\end{proof}

We show that every formula is transformed into a formula in prenex normal form by applying $\DTA$.

\begin{defn}
For each formula $\varphi$, let $\PNF(\varphi)$ be the set $\{\psi \mid \varphi \DTA \psi$ and $\psi$ is in prenex normal form$\}$. 
\end{defn}

\begin{thm}
\label{thm: Prenex normalization}
For any formula $\varphi$, $\PNF(\varphi)  \neq \emptyset$.
\end{thm}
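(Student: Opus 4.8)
The plan is to prove that $\PNF(\varphi) \neq \emptyset$ by induction on the structure of $\varphi$. The statement asserts that every formula can be transformed into prenex normal form via $\DTA$, so the natural approach is structural induction matching the inductive clauses for formula construction, using Lemma \ref{DTA} to lift transformations on subformulas to transformations on the whole formula.

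First I would handle the base case: if $\varphi$ is quantifier-free (i.e.\ atomic or $\perp$), then $\varphi$ is already in prenex normal form, being in $\Sigma_0 = \Pi_0$, so $\varphi \in \PNF(\varphi)$ by reflexivity of $\DTA$ (Proposition \ref{prop: DTA is reflexive and transitive}). For the inductive step, I would assume the claim holds for immediate subformulas. For a formula $Q x\, \varphi_1$ with $Q \in \{\forall, \exists\}$, by the induction hypothesis there is a prenex $\psi_1$ with $\varphi_1 \DTA \psi_1$; applying Lemma \ref{DTA} gives $Q x\, \varphi_1 \DTA Q x\, \psi_1$, and $Q x\, \psi_1$ is again prenex, so we are done. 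The substantial cases are the binary connectives $\varphi_1 \circ \varphi_2$ for $\circ \in \{\land, \lor, \to\}$: here the induction hypothesis yields prenex forms $\psi_1, \psi_2$ of $\varphi_1, \varphi_2$, and by Lemma \ref{DTA} it suffices to show $\psi_1 \circ \psi_2 \in \PNF(\psi_1 \circ \psi_2)$, i.e.\ that the combination of two prenex formulas by a connective can itself be brought to prenex form.

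The main obstacle, and the heart of the argument, is this binary case: pulling the quantifier prefixes of $\psi_1$ and $\psi_2$ out through $\circ$ to the front. The transformation rules 1--8 in Definition \ref{DT} are precisely the tools for this, but they only strip one outermost quantifier at a time and only when it sits immediately adjacent to the connective, so I expect to run an inner induction on the total number of quantifiers in $\psi_1 \circ \psi_2$ (finite by Proposition \ref{prop: basic properties of DTA}\eqref{item: DTA-NQ}). The idea is: if both $\psi_1$ and $\psi_2$ are quantifier-free, the whole formula is already prenex; otherwise at least one of $\psi_1, \psi_2$ begins with a quantifier, and I would use an appropriate rule among 4--7 (for $\land, \lor$) or 1--3 (for $\to$) to move one such outermost quantifier outside $\circ$. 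A delicate point is the variable condition $x \notin \FV(\delta)$ required by the rules: since the bound variable of the pulled quantifier may clash with free variables on the other side of $\circ$, I would first apply rule 8-$Q$ to rename that bound variable to a genuinely fresh $y$ before applying the extraction rule. After one such extraction, the resulting formula has the shape $Q y\, (\chi_1 \circ \chi_2)$ where $\chi_1 \circ \chi_2$ has strictly fewer quantifiers adjacent to the connective; applying the inner induction hypothesis to $\chi_1 \circ \chi_2$ and then Lemma \ref{DTA} under the prefix $Q y$ completes the step.

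One should be careful to set up the inner induction so that the measure genuinely decreases: extracting a quantifier via rules 4--7 or 1--3 reduces the number of quantifiers occurring inside the two prenex components by one, while rule 8-$Q$ (renaming) does not change this count and is used only as a preprocessing step to satisfy the side condition, so the induction should be framed on the number of quantifiers in $\psi_1$ together with $\psi_2$ rather than on derivation length. With the connective case established, combining it with the quantifier and base cases via the outer structural induction and the transitivity of $\DTA$ yields $\PNF(\varphi) \neq \emptyset$ for all $\varphi$.
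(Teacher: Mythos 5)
Your proof is correct, but it is scaffolded differently from the paper's. The paper proves the theorem by a double numeric induction on the whole formula: an outer induction on the number of quantifiers and, inside it, an induction on the number of logical connectives. The reason the paper needs the quantifier-count measure is visible in its binary case: after extracting one quantifier from $\varphi_0' \circ \varphi_1$ it must apply the induction hypothesis to $\psi_0'(y) \circ \varphi_1$, which is \emph{not} a subformula of $\varphi$ (it has the same number of connectives as $\varphi$, only one fewer quantifier), so plain structural induction would not go through there. You sidestep this by factoring the argument: an outer structural induction that invokes the hypothesis only on the immediate subformulas $\varphi_1, \varphi_2$, plus a self-contained merging lemma --- two prenex formulas joined by $\circ$ can be brought to prenex form --- proved by an inner induction on the total quantifier count of the pair. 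The mechanism inside that lemma is exactly the paper's: rename the outermost bound variable to a fresh $y$ via 8-$Q$ so that the side condition $x \notin \FV(\delta)$ of the extraction rules is met, extract via the appropriate rule among 1, 2, 3-$Q$, 4-$Q$--7-$Q$ (noting, as you implicitly allow, that the extracted quantifier may flip, e.g.\ under rule 2), and recurse under the prefix using Lemma \ref{DTA} and transitivity. Your measure analysis is also right: extraction strictly decreases the quantifier count of the merged pair, while renaming leaves it fixed and serves only as preprocessing, so the inner induction is well-founded. What your decomposition buys is modularity --- the merging lemma is a clean reusable statement and each induction carries a single simple measure; what the paper's version buys is compactness, dispatching everything in one interleaved induction without stating an auxiliary lemma. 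Both arguments are complete.
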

\begin{proof}
We prove the theorem by induction on the number of quantifiers contained.
If $\varphi$ contains no quantifiers, then $\varphi \in \PNF(\varphi)$ by Proposition \ref{prop: DTA is reflexive and transitive}. 

Suppose that the theorem holds for formulas containing at most $n$ quantifiers.
In what follows, we prove by induction on the number of logical connectives that the theorem holds for formulas containing exactly $n+1$ quantifiers.
Suppose also that the theorem holds for formulas containing less than $k$ logical connectives. 
Let $\varphi$ contain exactly $k$ logical connectives and exactly $n+1$ quantifiers. 

If $\varphi$ is of the form $Q x\, \psi$ for some $Q \in \{\forall, \exists\}$ and $\psi$, then $\psi$ contains exactly $n$ quantifiers. 
By the induction hypothesis, there exists a $\psi' \in \PNF(\psi)$ such that $\psi \DTA \psi'$. 
By Lemma \ref{DTA}, we obtain $Q x \, \psi \DTA Q x\, \psi'$. 
Therefore, $Q x\, \psi' \in \PNF(\varphi)$. 

Otherwise, $\varphi$ is of the form $\varphi_0 \circ \varphi_1$ for some $\circ \in \{\land, \lor, \to\}$ and $\varphi_0, \varphi_1$. 
We prove only the case that $\varphi_0$ contains at least one quantifier (if $\varphi_0$ contains no quantifier, then $\varphi_1$ contains at least one quantifier). 
Since $\varphi_0$ contains less than $k$ logical connectives, by the induction hypothesis, there exists a $\varphi_0' \in \PNF(\varphi_0)$ such that $\varphi_0 \DTA \varphi_0'$. 
By Lemma \ref{DTA}, we obtain $\varphi \DTA \varphi_0' \circ \varphi_1$. 
Since $\varphi_0'$ contains at least one quantifier, by Proposition \ref{prop: basic properties of DTA}.\eqref{item: DTA-NQ}, $\varphi_0'$ is of the form $Q x\, \psi_0'(x)$. 
Let $y$ be any variable not occurring in $\psi_0'$ and $\varphi_1$. 
Then, by transformation $8$-$Q$, we have $Q x\, \psi_0'(x) \DT Q y\, \psi_0'(y)$. 
Also, for some appropriate quantifier $Q'$, we have $Q y\, \psi_0'(y) \circ \varphi_1 \DT Q' y\, (\psi_0'(y) \circ \varphi_1)$ by transformations $1,2$, $4$-$Q$ or $6$-$Q$, and hence, $\varphi \DTA Q' y\, (\psi_0'(y) \circ \varphi_1)$
by Lemma \ref{DTA}.
Note that $\psi_0'(y) \circ \varphi_1$ contains exactly $n$ quantifiers by Proposition \ref{prop: basic properties of DTA}.\eqref{item: DTA-NQ}.
Then, by the induction hypothesis, there exists a formula $\xi$ in prenex normal form such that $(\psi_0'(y) \circ \varphi_1) \DTA \xi$. 
By Lemma \ref{DTA} and Proposition \ref{prop: DTA is reflexive and transitive}, we have $\varphi \DTA Q' y\, \xi$. 
Therefore, $Q' y\, \xi \in \PNF(\varphi)$. 
\end{proof}

\begin{cor}[Prenex normal form theorem]
For a classical first-order theory ${\bf T}$ and a ${\bf T}$-formula $\vp$,  there exists a ${\bf T}$-formula $\vp'$ in prenex normal form such that $\FV(\vp) =\FV(\vp')$ and ${\bf T} \vdash \vp \leftrightarrow \vp'$.
\end{cor}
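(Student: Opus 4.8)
The plan is to read off the corollary almost immediately from Theorem~\ref{thm: Prenex normalization} together with the two preservation properties already recorded for $\DTA$. Given a classical first-order theory ${\bf T}$ and a ${\bf T}$-formula $\vp$, I would first apply Theorem~\ref{thm: Prenex normalization} to obtain some $\vp' \in \PNF(\vp)$; by the definition of $\PNF(\vp)$ this $\vp'$ is in prenex normal form and satisfies $\vp \DTA \vp'$.

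Next I would check that $\vp'$ is still a ${\bf T}$-formula and that it has the right free variables. The former is clear because each prenex transformation in Definition~\ref{DT} only renames bound variables and relocates quantifiers, introducing no new nonlogical symbols; hence the whole chain witnessing $\vp \DTA \vp'$ stays within the language of ${\bf T}$. For the free variables, Proposition~\ref{prop: basic properties of DTA}.\eqref{item: DTA-FV} gives $\FV(\vp) = \FV(\vp')$ directly.

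Finally, for the provable equivalence, I would invoke Proposition~\ref{prop: DTA is classically valid}, which states that $\vp$ and $\vp'$ are equivalent in classical first-order predicate logic. Since ${\bf T}$ is a classical theory, every theorem of classical predicate logic is a theorem of ${\bf T}$, so ${\bf T} \vdash \vp \leftrightarrow \vp'$. Combining the three observations yields the desired $\vp'$, completing the argument.

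I do not expect any genuine obstacle here: all the substantive work has been absorbed into Theorem~\ref{thm: Prenex normalization}, which supplies the existence of a prenex transform, and into the routine propositions recording that $\DTA$ preserves free variables and classical validity. The only point meriting a word of care is the implicit remark that the transformations keep us inside the language of ${\bf T}$, so that $\vp'$ genuinely counts as a ${\bf T}$-formula rather than merely a formula of some richer language.
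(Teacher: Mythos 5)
Your proposal is correct and follows essentially the same route as the paper, which likewise derives the corollary directly from Theorem~\ref{thm: Prenex normalization} together with Proposition~\ref{prop: DTA is classically valid} (with Proposition~\ref{prop: basic properties of DTA}.\eqref{item: DTA-FV} covering the free-variable condition, which you rightly make explicit). Your added remark that the transformations of Definition~\ref{DT} introduce no new nonlogical symbols, so $\vp'$ remains a ${\bf T}$-formula, is a harmless extra precision left implicit in the paper.
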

\begin{proof}
By Theorem \ref{thm: Prenex normalization} and Proposition \ref{prop: DTA is classically valid}.
\end{proof}

\section{Characterization of the hierarchical classes}
In this section, we first prove a hierarchical version of Theorem \ref{thm: Prenex normalization} for cumulative classes $\E_k^+$ and $\U_k^+$ of formulas.
Then we also investigate the converse direction and prove that if a formula $\vp$ can be transformed into some formula in $\Sigma_k^+$ (resp. $\Pi_k^+$), then $\vp $ is in $\E_k^+$ (resp. $\U_k^+$).
Then it follows that $\E_k^+$ and $\U_k^+$ are exactly the classes whose formulas are transformed into $\Sigma_k^+$ and $\Pi_k^+$, respectively.
Using those characterizations for $\E_k^+$ and $\U_k^+$, we also have reasonable characterizations for $\F_k^+$, $\E_k$, $\U_k$ and $\PF_k$ (cf.~Theorem \ref{thm: main results}).
In what follows, we sometimes use Propositions  \ref{prop: DTA is reflexive and transitive} and \ref{prop: basic properties of DTA} and Lemma \ref{DTA} without mention.

Firstly, we prove a basic lemma concerning logical connectives, quantifiers and the relation $\DTA$.

\begin{lem}\label{Connectives1}
Let $\circ \in \{\land, \lor\}$. 
\begin{description}
	\item [{\rm(A)}] If $\varphi \in \Sigma_{k+1}^+$, $\psi \in \Sigma_{k+1}^+$, and $k + 1 = \max \{\deg(\varphi), \deg(\psi)\}$, then there exists a $\sigma \in \Sigma_{k+1}$ such that $\varphi \circ \psi \DTA \sigma$. 

	\item [{\rm(B)}] If $\varphi \in \Pi_{k+1}^+$, $\psi \in \Pi_{k+1}^+$, and $k + 1 = \max \{\deg(\varphi), \deg(\psi)\}$, then there exists a $\pi \in \Pi_{k+1}$ such that $\varphi \circ \psi \DTA \pi$. 

	\item [{\rm(C)}] If $\varphi \in \Sigma_{k+1}$ and $\psi \in \Pi_{k+1}$, then there exist $\sigma \in \Sigma_{k+2}$ and $\pi \in \Pi_{k+2}$ such that $\varphi \circ \psi \DTA \sigma$ and $\varphi \circ \psi \DTA \pi$. 

	\item [{\rm(D)}] If $\varphi \in \Pi_{k+1}$ and $\psi \in \Sigma_{k+1}$, then there exist $\sigma \in \Sigma_{k+2}$ and $\pi \in \Pi_{k+2}$ such that $\varphi \circ \psi \DTA \sigma$ and $\varphi \circ \psi \DTA \pi$. 

	\item [{\rm(E)}] If $\varphi \in \Sigma_{k+1}^+$, $\psi \in \Pi_{k+1}^+$, and $k + 1 = \max \{\deg(\varphi), \deg(\psi)\}$, then there exists a $\pi \in \Pi_{k+1}$ such that $(\varphi \to \psi) \DTA \pi$. 

	\item [{\rm(F)}] If $\varphi \in \Pi_{k+1}^+$, $\psi \in \Sigma_{k+1}^+$, and $k + 1 = \max \{\deg(\varphi), \deg(\psi)\}$, then there exists a $\sigma \in \Sigma_{k+1}$ such that $(\varphi \to \psi) \DTA \sigma$. 

	\item [{\rm(G)}] If $\varphi, \psi \in \Sigma_{k+1}$, then there exist $\sigma \in \Sigma_{k+2}$ and $\pi \in \Pi_{k+2}$ such that $(\varphi \to \psi) \DTA \sigma$ and $(\varphi \to \psi) \DTA \pi$. 

	\item [{\rm(H)}] If $\varphi, \psi \in \Pi_{k+1}$, then there exist $\sigma \in \Sigma_{k+2}$ and $\pi \in \Pi_{k+2}$ such that $(\varphi \to \psi) \DTA \sigma$ and $(\varphi \to \psi) \DTA \pi$. 

\end{description}
\end{lem}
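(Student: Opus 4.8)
The plan is to split the eight items into two groups. Items (A), (B), (E), (F) keep the maximal block count at $k+1$, and I will prove these four by simultaneous induction on $k$; here (A) at level $k$ appeals to (B) at level $k-1$ and symmetrically, while (E) appeals to (F) at level $k-1$ and symmetrically. Items (C), (D), (G), (H) raise the count to $k+2$, and I will obtain each of them, for a fixed $k$, directly from the \emph{same-level} instances of (A), (B), (E), (F); since nothing depends on (C), (D), (G), (H), there is no circularity. The single mechanism used throughout is to pull a leading block of identical quantifiers to the front by iterating the transformations of Definition \ref{DT} (renaming bound variables via $8$-$Q$ to avoid capture), composing the steps inside subformulas through Lemma \ref{DTA} and Proposition \ref{prop: DTA is reflexive and transitive}, with the degree and quantifier bookkeeping supplied by Proposition \ref{prop: basic properties of DTA}.

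The base case $k = 0$ of (A), (B), (E), (F) is immediate: each hypothesis forces every body beneath the pulled quantifiers to be quantifier-free, and a conjunction, disjunction, or implication of quantifier-free formulas is again quantifier-free, so one leading block of the correct kind yields the required $\Sigma_1$ or $\Pi_1$ form. For the inductive step of (A) (and dually (B)), given $\varphi,\psi \in \Sigma_{k+1}^+$ with $\max\{\deg(\varphi),\deg(\psi)\} = k+1$, I pull the leading existential block off each of $\varphi$ and $\psi$ when present (using $4$-$\exists$, $5$-$\exists$, $6$-$\exists$, $7$-$\exists$), arriving at $\exists \vec{z}\,(\varphi^{*} \circ \psi^{*})$. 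A short case analysis on which side attains degree $k+1$ shows that each of $\varphi^{*},\psi^{*}$ is a $\Pi_i$ formula for some $i \le k$, hence lies in $\Pi_k^+$, and that $\max\{\deg(\varphi^{*}),\deg(\psi^{*})\} = k$; so the induction hypothesis (B) at level $k-1$ transforms $\varphi^{*} \circ \psi^{*}$ into a $\Pi_k$ formula, whence $\exists \vec{z}\,(\varphi^{*} \circ \psi^{*}) \in \Sigma_{k+1}$.

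The delicate items are (E) and (F), and this is where I expect the main obstacle. For (E), with antecedent $\varphi \in \Sigma_{k+1}^+$ and consequent $\psi \in \Pi_{k+1}^+$ of maximal degree $k+1$, I pull the leading existential block off the antecedent (by transformation $1$, which flips it to $\forall$) and the leading universal block off the consequent (by $3$-$\forall$, which keeps it $\forall$), obtaining $\forall \vec{z}\,(\varphi^{*} \to \psi^{*})$ with a purely universal prefix. The crux is that $\varphi^{*}$ is always a $\Pi_i$ formula ($i \le k$) and $\psi^{*}$ always a $\Sigma_i$ formula ($i \le k$), with $\max\{\deg(\varphi^{*}),\deg(\psi^{*})\}=k$: this holds precisely because I pull exactly the blocks whose leading quantifier is existential in the antecedent and universal in the consequent, so the residual antecedent can never be a genuine $\Sigma_k$ and the residual consequent never a genuine $\Pi_k$. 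Thus $\varphi^{*} \to \psi^{*}$ is an instance of the hypothesis of (F) at level $k-1$, producing a $\Sigma_k$ formula, and the universal prefix gives the desired $\Pi_{k+1}$. Item (F) is symmetric, exchanging $\forall$ and $\exists$ (pulling via $2$ and $3$-$\exists$) and using (E) at level $k-1$. Verifying that the residual bodies keep their cumulative membership ($\varphi^{*}\in\Pi_k^+$, $\psi^{*}\in\Sigma_k^+$) under every combination of degrees is the one place where the definitions of $\Sigma_k^+$ and $\Pi_k^+$, rather than $\Sigma_k$ and $\Pi_k$, are essential.

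Finally, for (C), (D), (G), (H) I pull a single leading block from the side that exposes the quantifier demanded by the target class. For the $\Sigma_{k+2}$ conclusion I pull so that the outermost quantifier becomes $\exists$ and the remaining body is an instance of one of (A), (B), (E), (F) at the \emph{same} level $k$ (yielding a $\Sigma_{k+1}$ or $\Pi_{k+1}$ body); for the $\Pi_{k+2}$ conclusion I do the symmetric thing with a leading $\forall$. For instance, in (G) with $\varphi,\psi \in \Sigma_{k+1}$, pulling the consequent's existential block (via $3$-$\exists$) reduces $\varphi \to \psi$ to $\exists \vec{z}\,(\varphi \to \psi')$ with $\varphi \to \psi'$ an instance of (E), while pulling the antecedent's existential block (via $1$, flipped to $\forall$) reduces it to $\forall \vec{z}\,(\varphi' \to \psi)$ with $\varphi' \to \psi$ an instance of (F); the cases (C), (D), (H) follow by the same recipe with the evident choices of transformation among $3$-$Q$, $4$-$Q$, $5$-$Q$, $6$-$Q$, $7$-$Q$ and $1$, $2$.
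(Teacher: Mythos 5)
Your proposal is correct and follows essentially the same route as the paper: a simultaneous induction on $k$ for the cumulative clauses (the paper does (A)/(B) together and notes (E)/(F) are analogous, which matches your four-way grouping since the dependencies are only (A)$\leftrightarrow$(B) and (E)$\leftrightarrow$(F)), with (C), (D), (G), (H) obtained from same-level instances by pulling one quantifier block via transformations $1$--$8$-$Q$ and Lemma \ref{DTA}. Your explicit case analysis verifying that the residual bodies stay in $\Pi_k^+$ and $\Sigma_k^+$ with maximal degree $k$ is exactly the bookkeeping the paper compresses into ``Here $\varphi_0$ and $\psi_0$ are $\Pi_k^+$ formulas and $k = \max\{\deg(\varphi_0), \deg(\psi_0)\}$.''
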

\begin{proof}
We prove (A) and (B) simultaneously by induction on $k$. 
Suppose that (A) and (B) hold for $k' < k$. 
We give only a proof of (A), and (B) is proved similarly. 

Assume that $\varphi \in \Sigma_{k+1}^+$, $\psi \in \Sigma_{k+1}^+$, and $k+1 = \max \{\deg(\varphi), \deg(\psi)\}$. 
Then, $\varphi$ and $\psi$ are of the forms $\exists \vec{x} \varphi_0 (\vec{x})$ and $\exists \vec{y} \psi_0 (\vec{y})$, respectively. 
Here $\varphi_0$ and $\psi_0$ are $\Pi_k^+$ formulas and $k = \max \{\deg(\varphi_0), \deg(\psi_0)\}$. 
Also at least one of $\exists \vec{x}$ and $\exists \vec{y}$ is non-empty. 
Let $\vec{z}$ and $\vec{w}$ be any finite sequences of variables not occurring in $\varphi$ and $\psi$. 
Then, we have $\varphi \DTA \exists \vec{z} \varphi_0 (\vec{z})$ and $\psi \DTA \exists \vec{w} \psi_0  (\vec{w})$ by $8$-$\exists$. 
Also, we have $\exists \vec{z} \varphi_0 \circ \exists \vec{w} \psi_0 \DTA \exists \vec{z}  \exists \vec{w} (\varphi_0 \circ \psi_0)$ by $4$-$\exists$, $5$-$\exists$, $6$-$\exists$ and $7$-$\exists$.

If $k = 0$, by Lemma \ref{DTA}, we conclude $\varphi \circ \psi \DTA \exists \vec{z} \exists \vec{w} (\varphi_0 \circ \psi_0)$ and $\exists \vec{z} \exists \vec{w} (\varphi_0 \circ \psi_0) \in \Sigma_1$. 

If $k \geq 1$, then by the induction hypothesis, there exists a $\pi \in \Pi_k$ such that $\varphi_0 \circ \psi_0 \DTA \pi$. 
Then, by Lemma \ref{DTA}, $\varphi \circ \psi \DTA \exists \vec{z} \exists \vec{w} \pi \in \Sigma_{k+1}$. 

\medskip
(C): Suppose $\varphi \in \Sigma_{k+1}$ and $\psi \in \Pi_{k+1}$. 
Then, $\varphi$ is of the form $\exists \vec{x} \varphi_0 (\vec{x} )$ for some non-empty sequence $\vec{x}$ of variables and $\varphi_0 \in \Pi_k$. 
We have $\varphi \DTA \exists \vec{y}(\varphi_0 (\vec{y} ) \circ \psi)$ by $4$-$\exists$, $6$-$\exists$ and $8$-$\exists$. 
By (B), there exists a $\pi \in \Pi_{k+1}$ such that $\varphi_0(\vec{y} ) \circ \psi \DTA \pi$. 
By Lemma \ref{DTA}, $\varphi \circ \psi \DTA \exists \vec{y} \pi$ and $\exists \vec{y} \pi \in \Sigma_{k+2}$. 
The existence of a $\pi \in \Pi_{k+2}$ with $\varphi \circ \psi \DTA \pi$ is proved in a similar way with using (A). 

\medskip
(D) is proved as in the case (C) by using (A) and (B). 
Clauses (E), (F), (G) and (H) are proved similarly. 
\end{proof}

In what follows, for the sake of simplicity of description, for example, we refer to the first clause of Lemma \ref{Connectives1} simply as (A).

\begin{lem}\label{Connectives2}
Let $\circ \in \{\land, \lor\}$. 
\begin{enumerate}
	\item If $\varphi \in \Sigma_{k+1}^+$ and $\psi \in \Sigma_{k+1}^+$, then there exists a $\sigma \in \Sigma_{k+1}^+$ such that $\varphi \circ \psi \DTA \sigma$. 

	\item If $\varphi \in \Pi_{k+1}^+$ and $\psi \in \Pi_{k+1}^+$, then there exists a $\pi \in \Pi_{k+1}^+$ such that $\varphi \circ \psi \DTA \pi$. 

	\item If $\varphi \in \Pi_{k+1}^+$ and $\psi \in \Sigma_{k+1}^+$, then there exists a $\sigma \in \Sigma_{k+1}^+$ such that $(\varphi \to \psi) \DTA \sigma$. 

	\item If $\varphi \in \Sigma_{k+1}^+$ and $\psi \in \Pi_{k+1}^+$, then there exists a $\pi \in \Pi_{k+1}^+$ such that $(\varphi \to \psi) \DTA \pi$. 

\end{enumerate}
\end{lem}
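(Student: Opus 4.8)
The plan is to reduce each clause of Lemma \ref{Connectives2} to the corresponding clauses of Lemma \ref{Connectives1} by a case analysis on the degrees of $\varphi$ and $\psi$. The point is that Lemma \ref{Connectives1} handles only the ``top-level'' cases where the relevant degree equals $k+1$, so the work here is to organize the situations where one or both inputs have smaller degree, using the cumulativity built into the classes $\Sigma_{k+1}^+$ and $\Pi_{k+1}^+$. I would prove clause 1 in detail and indicate that clauses 2, 3 and 4 are entirely analogous (clause 2 via (B), clause 3 via (F) together with (D), and clause 4 via (E) together with (C)).

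For clause 1, suppose $\varphi, \psi \in \Sigma_{k+1}^+$ and set $m := \max\{\deg(\varphi), \deg(\psi)\}$; note $m \leq k+1$. First I would dispose of the degenerate case $m = 0$: then $\varphi \circ \psi$ is quantifier-free, hence already in $\Sigma_{k+1}^+$, and we take $\sigma \equiv \varphi \circ \psi$ with $\varphi \circ \psi \DTA \sigma$ by reflexivity (Proposition \ref{prop: DTA is reflexive and transitive}). If $m = k+1$, then $\varphi, \psi \in \Sigma_{k+1}^+$ with $k+1 = \max\{\deg(\varphi), \deg(\psi)\}$, so (A) applies directly and yields $\sigma \in \Sigma_{k+1} \subseteq \Sigma_{k+1}^+$ with $\varphi \circ \psi \DTA \sigma$.

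The remaining case is $1 \leq m \leq k$. Here both $\varphi$ and $\psi$ lie in $\Sigma_m^+$: indeed $\deg(\varphi), \deg(\psi) \leq m$, and any formula of degree $\leq m$ is in prenex normal form only up to transformation, so I must be slightly careful. Concretely, since $m = \max\{\deg(\varphi), \deg(\psi)\} \geq 1$, I would write $m = (m-1)+1$ and observe $\varphi, \psi \in \Sigma_{(m-1)+1}^+$ with $m = \max\{\deg(\varphi), \deg(\psi)\}$, so (A) (instantiated at $k' = m-1 < k$) gives a $\sigma \in \Sigma_m$ with $\varphi \circ \psi \DTA \sigma$. Since $m \leq k$, we have $\Sigma_m \subseteq \bigcup_{i<k+1} \Sigma_i \subseteq \Sigma_{k+1}^+$, so $\sigma \in \Sigma_{k+1}^+$ as required.

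The main obstacle, and the only genuinely nonroutine point, is the bookkeeping with the cumulative definitions: one must track that the hypotheses of the appropriate clause of Lemma \ref{Connectives1} (namely that the two inputs sit in $\Sigma^+$ or $\Pi^+$ at exactly the level matching their common maximal degree) are met after re-indexing, and that the resulting prenex formula at level $m$ is reabsorbed into the target class $\Sigma_{k+1}^+$ (resp.\ $\Pi_{k+1}^+$) via $\Sigma_m \subseteq \Sigma_{k+1}^+$ for $m \leq k$. The mixed-class clauses 3 and 4 additionally require invoking the auxiliary clauses (E), (F), (C), (D) of Lemma \ref{Connectives1}, where in the case $m = k+1$ one of the inputs may genuinely be a $\Pi_{k+1}$ or $\Sigma_{k+1}$ formula rather than a lower-level one; I expect no new difficulty there beyond the same re-indexing argument.
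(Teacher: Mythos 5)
There is a genuine gap in your treatment of the sub-top case $1 \leq m \leq k$ of clause 1. You claim that $\deg(\varphi), \deg(\psi) \leq m$ forces $\varphi, \psi \in \Sigma_m^+$, and then apply (A) at level $m$ to obtain $\sigma \in \Sigma_m$ with $\varphi \circ \psi \DTA \sigma$. But the cumulative class $\Sigma_{k+1}^+$ contains $\Pi_i$ for every $i < k+1$, so a member of $\Sigma_{k+1}^+$ of degree exactly $m \leq k$ may be a $\Pi_m$-formula, which does \emph{not} belong to $\Sigma_m^+$ (that class absorbs $\Pi_i$ only for $i < m$). Concretely, take $k = 1$, $\varphi \equiv \forall x\, P(x)$ and $\psi \equiv \forall y\, Q(y)$: both lie in $\Pi_1 \subseteq \Sigma_2^+$ and $m = 1$, yet neither is in $\Sigma_1^+$, and your conclusion would assert $\varphi \land \psi \DTA \sigma$ for some $\sigma \in \Sigma_1$ --- which is impossible, since $\varphi \land \psi \in \U_1$, and by Lemma \ref{LemC} (equivalently Theorem \ref{MT2}) any $\DTA$-image in $\Sigma_1^+$ would place $\varphi \land \psi$ in $\E_1^+$, contradicting Lemma \ref{lem: Classification}.\eqref{item: Ek+1 cap Uk+1+ = empty}. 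What is true, and what the paper proves, is weaker but sufficient: the output need only land in $\Sigma_{k+1}^+$, not in $\Sigma_m$. Accordingly, the paper splits the sub-top case (with $k_0$ the maximal degree, attained say by $\varphi$) into four subcases according to the type of the outermost quantifier block: $\varphi \in \Sigma_{k_0},\ \psi \in \Sigma_{k_0}^+$ via (A); $\varphi \in \Pi_{k_0},\ \psi \in \Pi_{k_0}^+$ via (B), landing in $\Pi_{k_0} \subseteq \Sigma_{k+1}^+$; and the mixed pairs $\Sigma_{k_0}/\Pi_{k_0}$ and $\Pi_{k_0}/\Sigma_{k_0}$ via (C) and (D), landing in $\Sigma_{k_0+1} \subseteq \Sigma_{k+1}^+$. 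Your argument covers only the first of these four subcases.

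A related mislabeling compounds the problem for the implication clauses: you propose clause 3 via ``(F) together with (D)'' and clause 4 via ``(E) together with (C)'', but (C) and (D) concern $\circ \in \{\land, \lor\}$, not $\to$. In the sub-top case of clauses 3 and 4, the same-type pairs $\varphi, \psi \in \Sigma_{k_0}$ and $\varphi, \psi \in \Pi_{k_0}$ require (G) and (H) of Lemma \ref{Connectives1}, respectively. Your handling of the top case $m = k+1$ (direct appeal to (A), (E), (F)) and of the degenerate case $m = 0$ (reflexivity of $\DTA$) is correct and matches the paper; the missing idea is precisely the case analysis on whether each input of maximal degree is existential-type or universal-type, which is the entire content of the sub-top case.
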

\begin{proof}
1. Let $\varphi \in \Sigma_{k+1}^+$ and $\psi \in \Sigma_{k+1}^+$. 
If $k+1= \max \{\deg(\varphi), \deg(\psi)\}$, then by (A), there exists a $\sigma \in \Sigma_{k+1} \subseteq \Sigma_{k+1}^+$ such that $\varphi \circ \psi \DTA \sigma$. 

Suppose $k+1 > k_0 = \max \{\deg(\varphi), \deg(\psi)\}$. 
We may assume that $k_0 \neq 0$.
We prove only the case of $k_0 = \deg(\varphi)$. 
The case of $k_0 = \deg(\psi)$ is proved similarly.  
We distinguish the following four cases: 
\begin{itemize}
	\item If $\varphi \in \Sigma_{k_0}$ and $\psi \in \Sigma_{k_0}^+$, then by (A), there exists a $\sigma \in \Sigma_{k_0} \subseteq \Sigma_{k+1}^+$ such that $\varphi \circ \psi \DTA \sigma$. 

	\item If $\varphi \in \Pi_{k_0}$ and $\psi \in \Pi_{k_0}^+$, then by (B), there exists a $\pi \in \Pi_{k_0} \subseteq \Sigma_{k+1}^+$ such that $\varphi \circ \psi \DTA \pi$. 

	\item If $\varphi \in \Sigma_{k_0}$ and $\psi \in \Pi_{k_0}$, then by (C), there exists a $\sigma \in \Sigma_{k_0+1} \subseteq \Sigma_{k+1}^+$ such that $\varphi \circ \psi \DTA \sigma$. 

	\item If $\varphi \in \Pi_{k_0}$ and $\psi \in \Sigma_{k_0}$, then by (D), there exists a $\sigma \in \Sigma_{k_0+1} \subseteq \Sigma_{k+1}^+$ such that $\varphi \circ \psi \DTA \sigma$. 

\end{itemize}
Other clauses are proved in a similar way. 
\end{proof}

\begin{thm}
\label{MT1}\leavevmode
\begin{enumerate}
	\item If $\varphi \in \E_k^+$, then $\PNF(\varphi) \cap \Sigma_k^+  \neq \emptyset$.
	\item If $\varphi \in \U_k^+$, then $\PNF(\varphi) \cap \Pi_k^+  \neq \emptyset$.
\end{enumerate}
\end{thm}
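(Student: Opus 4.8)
The natural approach is a simultaneous induction on $k$, proving clauses 1 and 2 together, with an inner structural induction on the formula $\varphi$. The base case $k=0$ is immediate: if $\varphi \in \E_0^+ = \F_0 = \Sigma_0$, then $\varphi$ is quantifier-free, hence already in $\Sigma_0 \subseteq \Sigma_0^+$, and $\varphi \in \PNF(\varphi)$ by reflexivity of $\DTA$. The same works for $\U_0^+$. For the inductive step, I would assume both clauses hold for all $k' < k$ and prove them for $k$. The key tool here is Lemma \ref{lem: basic facts on our classes}, which gives an exact characterization of membership in $\E_k^+$ and $\U_k^+$ according to the outermost logical connective of $\varphi$; this lets me decompose $\varphi$ and apply the induction hypothesis to its immediate subformulas.

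The plan is to split on the shape of $\varphi$. If $\varphi$ is quantifier-free, it lies in $\Sigma_0 \subseteq \Sigma_k^+$ and we are done. If $\varphi \equiv \varphi_1 \circ \varphi_2$ with $\circ \in \{\land, \lor\}$ and (say) $\varphi \in \E_k^+$, then by Lemma \ref{lem: basic facts on our classes}.\eqref{item: EU and}--\eqref{item: EU or} both $\varphi_1, \varphi_2 \in \E_k^+$; applying the structural induction hypothesis yields $\sigma_1, \sigma_2 \in \Sigma_k^+$ with $\varphi_i \DTA \sigma_i$, so $\varphi \DTA \sigma_1 \circ \sigma_2$, and then Lemma \ref{Connectives2}.1 produces a $\sigma \in \Sigma_k^+$ with $\sigma_1 \circ \sigma_2 \DTA \sigma$. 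The implication case $\varphi \equiv \varphi_1 \to \varphi_2 \in \E_k^+$ is handled analogously using Lemma \ref{lem: basic facts on our classes}.\eqref{item: EU to} (giving $\varphi_1 \in \U_k^+$, $\varphi_2 \in \E_k^+$) together with Lemma \ref{Connectives2}.3. The quantifier cases are where the outer induction on $k$ is genuinely used: if $\varphi \equiv \exists x\, \varphi_1 \in \E_k^+$, then by Lemma \ref{lem: basic facts on our classes}.\eqref{item: E exists} we have $\varphi_1 \in \E_k^+$, so the structural induction hypothesis gives $\sigma_1 \in \Sigma_k^+$ with $\varphi_1 \DTA \sigma_1$, whence $\varphi \DTA \exists x\, \sigma_1$; here one must check that prefixing $\exists x$ keeps us in $\Sigma_k^+$, which requires a short case analysis on where $\sigma_1$ sits in the cumulative hierarchy (possibly applying an $8$-$Q$ renaming and merging quantifier blocks). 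The critical case is $\varphi \equiv \forall x\, \varphi_1 \in \E_{k+1}^+$: by Lemma \ref{lem: basic facts on our classes}.\eqref{item: E forall} this forces $\varphi_1 \in \U_k^+$, so I invoke clause 2 of the \emph{induction hypothesis at level $k$} to get $\pi_1 \in \Pi_k^+$ with $\varphi_1 \DTA \pi_1$, and then $\forall x\, \pi_1$ lands in $\Sigma_{k+1}^+$.

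I expect the main obstacle to be the bookkeeping in the quantifier cases: after obtaining a prenex $\sigma_1 \in \Sigma_k^+$ or $\pi_1 \in \Pi_k^+$ for the subformula, showing that attaching the outer quantifier yields a formula still in $\Sigma_k^+$ (resp.\ $\Pi_k^+$) requires tracking whether the inner formula actually realizes degree $k$ or sits lower in the cumulative hierarchy, since $\Sigma_k^+ = \Sigma_k \cup \bigcup_{i<k}(\Sigma_i \cup \Pi_i)$ collapses several strata. Prefixing $\exists x$ to a genuine $\Sigma_k$ formula keeps it $\Sigma_k$ (by merging the existential block, possibly after an $8$-$\exists$ rename), but prefixing it to a $\Pi_{k-1}$ or lower formula may only give membership after a careful appeal to the definitions. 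The cleanest way to absorb this is to prove the quantifier cases hand-in-hand with the degree bookkeeping already established in Lemma \ref{lem: basic facts on our classes}, so that each membership claim reduces to one of its clauses rather than to an ad hoc computation. The structural induction on subformula complexity, nested inside the induction on $k$, keeps all the needed instances available.
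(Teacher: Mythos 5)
Your proposal is correct and follows essentially the same route as the paper: decompose $\varphi$ by its outermost connective via Lemma \ref{lem: basic facts on our classes}, combine the prenex forms of the subformulas using Lemma \ref{Connectives2} in the connective cases, and handle the quantifier cases with exactly the cumulative-hierarchy bookkeeping you describe (the paper likewise concludes, e.g., $\exists x\, \varphi_0 \DTA \exists x\, \sigma_0 \in \Sigma_{k-1}^+ \subseteq \Pi_k^+$ in the $\U_k^+$ case). The only cosmetic difference is that the paper runs a single structural induction on $\varphi$ with $k$ universally quantified in the induction hypothesis, so the instance of clause 2 at level $k$ needed for $\forall x\, \varphi_1 \in \E_{k+1}^+$ is already available without your outer induction on $k$, which is therefore unnecessary though harmless.
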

\begin{proof}
We may assume that $k\geq 1$.
We prove clauses 1 and 2 simultaneously by induction on the structure of $\varphi$. 
If $\varphi$ is atomic, then $\varphi \in \PNF(\varphi) \cap \Sigma_0 \subseteq \PNF(\varphi) \cap \Sigma_k^+ \cap \Pi_k^+$. 

We suppose that the theorem holds for $\varphi_0$ and $\varphi_1$.

\begin{itemize}
	\item Case of $\varphi \equiv \varphi_0 \circ \varphi_1$ for $\circ \in \{\land, \lor\}$. 

	1. If $\varphi_0 \circ \varphi_1 \in \E_k^+$, then $\varphi_0, \varphi_1 \in \E_k^+$ by Lemma \ref{lem: basic facts on our classes}.\eqref{item: EU and}, \eqref{item: EU or}. 
By the induction hypothesis, there exist $\sigma_0, \sigma_1 \in \Sigma_k^+$ such that $\varphi_0 \DTA \sigma_0$ and $\varphi_1 \DTA \sigma_1$. 
By Lemma \ref{Connectives2}.(1), there exists a $\sigma \in \Sigma_k^+$ such that $\sigma_0 \circ \sigma_1 \DTA \sigma$. 
Then, $\varphi_0 \circ \varphi_1 \DTA \sigma_0 \circ \sigma_1 \DTA \sigma$. 

	2. The case of $\varphi_0 \circ \varphi_1 \in \U_k^+$ is proved similarly by using Lemma \ref{Connectives2}.(2).  

	\item Case of $\varphi \equiv \varphi_0 \to \varphi_1$. 

	1. If $\varphi_0 \to \varphi_1 \in \E_k^+$, then $\varphi_0 \in \U_k^+$ and $\varphi_1 \in \E_k^+$ by Lemma \ref{lem: basic facts on our classes}.\eqref{item: EU to}. 
By the induction hypothesis, there exist $\pi_0 \in \Pi_k^+$ and $\sigma_1 \in \Sigma_k^+$ such that $\varphi_0 \DTA \pi_0$ and $\varphi_1 \DTA \sigma_1$. 
By Lemma \ref{Connectives2}.(3), there exists a $\sigma \in \Sigma_k^+$ such that $(\pi_0 \to \sigma_1) \DTA \sigma$. 
Then, $(\varphi_0 \to \varphi_1) \DTA (\pi_0 \to \sigma_1) \DTA \sigma$. 

	2. The case of $\varphi_0 \to \varphi_1 \in \U_k^+$ is proved similarly by using Lemma \ref{Connectives2}.(4).  

	\item Case of $\varphi \equiv \exists x \varphi_0$. 

	1. If $\exists x \varphi_0 \in \E_k^+$, then $\varphi_0 \in \E_k^+$ by Lemma \ref{lem: basic facts on our classes}.\eqref{item: E exists}. 
By the induction hypothesis, there exists a $\sigma_0 \in \Sigma_k^+$ such that $\varphi_0 \DTA \sigma_0$. 
Then, $\exists x \varphi_0 \DTA \exists x \sigma_0$ and $\exists x \sigma_0 \in \Sigma_k^+$. 

	2. If $\exists x \varphi_0 \in \U_k^+$, then $\exists x \varphi_0 \in \E_{k-1}^+$ by Lemma \ref{lem: basic facts on our classes}.\eqref{item: U exists}. 
	Then $\varphi_0 \in \E_{k-1}^+$.
By the induction hypothesis, there exists a $\sigma_{0} \in \Sigma_{k-1}^+ $ such that $\varphi_0 \DTA \sigma _{0}$, and hence, $\exists x \varphi_{0} \DTA \exists x \sigma_{0} \in\Sigma_{k-1}^{+} \subseteq \Pi_k^+$.

	\item Case of $\varphi \equiv \forall x \varphi_0$. \\
	This is proved similarly as in the case of $\exists$. \qedhere
\end{itemize}
\end{proof}

Secondly, we show the converse assertions of Theorem \ref{MT1}.

\begin{lem}\label{LemA}
Suppose $\varphi \DT \psi$. 
\begin{enumerate}
	\item If $\psi \in \E_k^+$, then $\varphi \in \E_k^+$. 
	\item If $\psi \in \U_k^+$, then $\varphi \in \U_k^+$. 
\end{enumerate}
\end{lem}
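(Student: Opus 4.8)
The plan is to prove both clauses simultaneously by a case analysis on which of the eight rows of the table in Definition~\ref{DT} the pair $(\varphi,\psi)$ matches, transferring membership \emph{backwards} from $\psi$ to $\varphi$ purely through the structural characterization in Lemma~\ref{lem: basic facts on our classes}. Since that characterization is insensitive to the names of bound variables and to the free-variable side conditions of $\DT$, row $8$-$Q$ is immediate: $Qx\,\xi(x)$ and $Qy\,\xi(y)$ have the same alternation paths and hence lie in exactly the same classes. Moreover, by Lemma~\ref{lem: basic facts on our classes}.\eqref{item: EU and} and \eqref{item: EU or}, conjunction and disjunction are governed by identical clauses and are symmetric in their two arguments, so rows $4$-$Q$, $5$-$Q$, $6$-$Q$ and $7$-$Q$ collapse to a single computation; this leaves essentially rows $1$, $2$, $3$-$Q$ and one conjunction-type row to treat. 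As a short preliminary observation, derived directly from Definition~\ref{def: Classes} (or Lemma~\ref{lem: Classification}), I would record the cumulative cross-inclusions
\[
\U_{k-1}^+\subseteq\E_k^+,\qquad \E_{k-1}^+\subseteq\U_k^+,
\]
together with $\F_i\subseteq\E_k^+\cap\U_k^+$ for $i<k$, since these are what absorb the index shift suffered by the ``side formula'' $\delta$ when a quantifier is moved across it.

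For a representative case, take row $1$, where $\varphi\equiv\exists x\,\xi(x)\to\delta$ and $\psi\equiv\forall x(\xi(x)\to\delta)$, and suppose $\psi\in\E_k^+$. As $\psi$ contains a quantifier we have $k\geq 1$. Peeling off the universal quantifier by Lemma~\ref{lem: basic facts on our classes}.\eqref{item: E forall} and then \eqref{item: U forall} lands us in $\U_{k-1}^+$, and splitting the implication by \eqref{item: EU to} yields $\xi\in\E_{k-1}^+$ and $\delta\in\U_{k-1}^+$. To reassemble $\varphi$ I would again use \eqref{item: EU to}: it suffices to show $\exists x\,\xi\in\U_k^+$ and $\delta\in\E_k^+$. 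The first follows from $\xi\in\E_{k-1}^+$ via \eqref{item: U exists} and \eqref{item: E exists}; the second follows from $\delta\in\U_{k-1}^+$ and the inclusion $\U_{k-1}^+\subseteq\E_k^+$. The remaining rows follow the same template: decompose $\psi$ using the clauses appropriate to its top connective and leading quantifier, then recombine into $\varphi$, invoking whichever of the two cross-inclusions above is needed to match the shifted index of $\delta$ against its target class.

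The step I expect to be the main obstacle is precisely the bookkeeping in the quantifier-absorbing rows, namely those in which clause \eqref{item: E forall} or \eqref{item: U exists} is invoked and the class index drops from $k$ to $k-1$ (rows $1$ and $2$, the $\forall$-subcase of $3$-$Q$ with target $\E_k^+$, and their $\exists$/$\U_k^+$ duals). There one must (i) observe $k\geq 1$ so that $k-1$ is meaningful (when $\psi$ has a quantifier it cannot lie in $\E_0^+=\U_0^+=\F_0$, so the $k=1$ instances are either handled uniformly or are vacuous); (ii) track whether moving the quantifier preserves or lowers the index of each of $\xi$ and $\delta$; and (iii) reconcile the shifted index of $\delta$ with its target using exactly one of the cross-inclusions. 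I do not anticipate difficulty beyond this bookkeeping: a single prenex step may raise the degree of a formula but never lowers it, which is why only the backward implication holds, and the structural clauses of Lemma~\ref{lem: basic facts on our classes} close under precisely the recombinations produced by each row.
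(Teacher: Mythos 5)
Your proposal is correct and follows essentially the same route as the paper: a case analysis over the eight rows of Definition~\ref{DT}, transferring membership from $\psi$ back to $\varphi$ by repeated application of Lemma~\ref{lem: basic facts on our classes}, with the cumulative inclusions $\U_{k-1}^+ \subseteq \E_k^+$ and $\E_{k-1}^+ \subseteq \U_k^+$ absorbing the index shift on the side formula $\delta$. The only cosmetic difference is in the representative row-1 case with $\psi \in \E_k^+$: the paper reassembles $\varphi$ entirely inside $\U_{k-1}^+$ and then applies $\U_{k-1}^+ \subseteq \E_k^+$ once at the end (noting $k \geq 2$), whereas you lift each component to level $k$ before recombining via Lemma~\ref{lem: basic facts on our classes}.\eqref{item: EU to} --- the same computation in a different order.
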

\begin{proof}
This lemma is proved by distinguishing the cases of the rows in the table in Definition \ref{DT} to which $\varphi$ and $\psi$ match.
In each case, we use the assertions in Lemma \ref{lem: basic facts on our classes} multiple times.
We prove only the case corresponding to the first row, and the other cases are proved in a similar way. 

For $x \notin \FV(\delta)$, suppose that $\varphi$ and $\psi$ are of the forms $\exists x \xi(x) \to \delta$ and $\forall x(\xi(x) \to \delta)$, respectively. 
	
	1. If $\forall x(\xi(x) \to \delta) \in \E_k^+$, then $k \geq 2$ and $\forall x(\xi(x) \to \delta) \in \U_{k-1}^+$.
	Since $\xi(x) \to \delta \in \U_{k-1}^+$,
 we have $\xi(x) \in \E_{k-1}^+$ and $\delta \in \U_{k-1}^+$.
	Then, $\exists x \xi(x) \in \E_{k-1}^+$, and hence $\exists x \xi(x) \to \delta \in \U_{k-1}^+ \subseteq \E_k^+$. 
	
	2. If $\forall x(\xi(x) \to \delta) \in \U_k^+$, then $\xi(x) \to \delta \in \U_k^+$. 
	We obtain $\xi(x) \in \E_k^+$ and $\delta \in \U_k^+$. 
	Then, $\exists x \xi(x) \in \E_k^+$, and thus $\exists x \xi(x) \to \delta \in \U_k^+$. 
\end{proof}

\begin{lem}\label{LemB}
Suppose that $\psi$ is obtained by replacing an occurrence of $\xi$ in $\varphi$ as a subformula with $\delta$ such that $\xi \DT \delta$. 
\begin{enumerate}
	\item If $\psi \in \E_k^+$, then $\varphi \in \E_k^+$. 
	\item If $\psi \in \U_k^+$, then $\varphi \in \U_k^+$. 
\end{enumerate}
\end{lem}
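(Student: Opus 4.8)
The plan is to prove both clauses simultaneously by induction on the structure of $\varphi$, taking as induction hypothesis both clauses for all $k$ at every immediate subformula of $\varphi$. The base case is the situation in which the marked occurrence of $\xi$ is all of $\varphi$; then $\varphi \equiv \xi$, $\psi \equiv \delta$, and $\varphi \DT \psi$, so both clauses are exactly Lemma \ref{LemA}.

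For the inductive step the marked occurrence lies strictly inside $\varphi$, so $\varphi$ carries a single outermost connective or quantifier and the occurrence sits in exactly one immediate subformula; $\psi$ then has the same outermost shape with only that subformula altered. I would split according to the shape of $\varphi$. In the conjunction and disjunction cases, say $\varphi \equiv \varphi_0 \circ \varphi_1$ with $\circ \in \{\land, \lor\}$ and the occurrence in $\varphi_0$, so that $\psi \equiv \varphi_0' \circ \varphi_1$, I would decompose $\psi \in \E_k^+$ via Lemma \ref{lem: basic facts on our classes}.\eqref{item: EU and} or \eqref{item: EU or} into $\varphi_0' \in \E_k^+$ and $\varphi_1 \in \E_k^+$, apply the induction hypothesis to pass from $\varphi_0'$ to $\varphi_0 \in \E_k^+$, and recombine by the same lemma; the $\U_k^+$ clause and the subcase with the occurrence in $\varphi_1$ are identical.

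The implication and quantifier cases are where carrying both $\E$ and $\U$ (and all $k$) through the induction simultaneously is indispensable, and this is the part to get right. For $\varphi \equiv \varphi_0 \to \varphi_1$ with the occurrence in $\varphi_0$, Lemma \ref{lem: basic facts on our classes}.\eqref{item: EU to} splits $\psi \in \E_k^+$ into $\varphi_0' \in \U_k^+$ and $\varphi_1 \in \E_k^+$, so one must invoke the induction hypothesis in its $\U_k^+$ form to obtain $\varphi_0 \in \U_k^+$ before recombining. For quantifiers I would use the clauses \eqref{item: U forall}--\eqref{item: U exists} of Lemma \ref{lem: basic facts on our classes}, which both trade $\E$ for $\U$ and shift the index; for instance, when $\varphi \equiv \forall x \varphi_0$ with $\psi \equiv \forall x \varphi_0' \in \E_k^+$, the membership reduces through \eqref{item: E forall} and \eqref{item: U forall} to $\varphi_0' \in \U_{k-1}^+$, so the hypothesis is applied in the $\U$ clause at index $k-1$ and the result is lifted back along the same equivalences. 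Here one uses that a formula containing a quantifier cannot lie in $\E_0^+ = \U_0^+$, so the relevant index is positive and the shifted clauses apply.

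In every case the exact equivalences of Lemma \ref{lem: basic facts on our classes} let me peel off the outermost symbol, apply the induction hypothesis to the single altered immediate subformula, and reassemble. I therefore expect no genuine obstacle: the only point requiring care is the bookkeeping of which class ($\E$ or $\U$) and which index the hypothesis must be invoked at, which is precisely why the two clauses are proved together over all $k$.
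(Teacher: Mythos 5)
Your proposal is correct and takes essentially the same route as the paper's proof: structural induction on $\varphi$ with both clauses (and all $k$) carried simultaneously, the case where the replaced occurrence is all of $\varphi$ discharged by Lemma \ref{LemA}, and each inductive case handled by peeling off the outermost symbol via the equivalences of Lemma \ref{lem: basic facts on our classes}, including the $\E$/$\U$ swap at implication antecedents and the index shift (with your correct observation that the index must be positive) at quantifiers. The only cosmetic difference is that the paper states its base case as atomic $\varphi$ and separately reduces to the situation where $\xi$ is a proper subformula, whereas you fold both into the single base case $\varphi \equiv \xi$.
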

\begin{proof}
We prove the lemma by induction on the structure of $\varphi$. 
If $\varphi$ is atomic, then $\varphi$ is the unique subformula of $\varphi$.
Since there is no $\xi$  such that $\xi \DT \vp$, we are done.

Suppose that the theorem holds for $\varphi_0$ and $\varphi_1$.
By Lemma \ref{LemB}, we may assume that $\xi$ is a proper subformula of $\varphi$. 

\begin{itemize}
	\item Case of $\varphi \equiv \varphi_0 \circ \varphi_1$ for $\circ \in \{\land, \lor\}$. 
	
	Suppose $\psi \in \E_k^+$ (resp.~$\U_k^+$). 
	Since $\xi$ is a proper subformula of $\varphi$, $\xi$ is a subformula of either $\varphi_0$ or $\varphi_1$.
 If $\xi$ is a subformula of $\varphi_0$, then $\psi$ is of the form $\psi_0 \circ \varphi_1$, where $\psi_0$ is obtained by replacing an occurence of $\xi$ in $\varphi_0$ with $\delta$. 
	By Lemma \ref{lem: basic facts on our classes}.\eqref{item: EU and},\eqref{item: EU or}, $\psi_0$ and $\varphi_1$ are in $\E_k^+$ (resp.~$\U_k^+$). 
	By the induction hypothesis, we have $\varphi_0 \in \E_k^+$ (resp.~$\U_k^+$). 
	Hence, $\varphi_0 \circ \varphi_1 \in \E_k^+$ (resp.~$\U_k^+$). 
	The case that $\xi$ is a subformula of $\varphi_1$ is proved in a similar way. 

	\item Case of $\varphi \equiv \varphi_0 \to \varphi_1$. 

	We only give a proof for the case that $\psi \in \E_k^+$ and $\xi$ is a subformula of $\varphi_0$. 
	The other cases are proved similarly. 
	In this case, $\psi$ is of the form $\psi_0 \to \varphi_1$, where $\psi_0$ is obtained by replacing an occurence of $\xi$ in $\varphi_0$ with $\delta$. 
	By Lemma \ref{lem: basic facts on our classes}.\eqref{item: EU to}, $\psi_0 \in \U_k^+$ and $\varphi_1 \in \E_k^+$. 
	By the induction hypothesis, we have $\varphi_0 \in \U_k^+$. 
	Hence, $\varphi_0 \to \varphi_1 \in \E_k^+$. 
		
	\item Case of $\varphi \equiv \exists x \varphi_0$. 
	
	Let $\psi_0$ be the formula obtained from $\varphi_0$ by replacing an occurrence of $\xi$ in $\varphi_0$ with $\delta$. 
	Then, $\psi$ is of the form $\exists x \psi_0$. 
	Suppose $\psi \in \E_k^+$ (resp.~$\U_k^+$). 
	By Lemma \ref{lem: basic facts on our classes}.\eqref{item: E exists} (resp. Lemma \ref{lem: basic facts on our classes}.\eqref{item: U exists}), $\psi_0 \in \E_k^+$ (resp.~$\E_{k-1}^+$). 
	By the induction hypothesis, we obtain $\varphi_0 \in \E_k^+$ (resp.~$\E_{k-1}^+$). 
	Hence, $\exists x \varphi_0 \in \E_k^+$ (resp.~$\E_{k-1}^+ \subseteq \U_k^+$). 
	
	\item Case of $\varphi \equiv \forall x \varphi_0$. 
	
	This is proved similarly as in the case of $\exists$ with using Lemma \ref{lem: basic facts on our classes}.\eqref{item: U forall},\eqref{item: E forall}.
 . \qedhere
\end{itemize}

\end{proof}

\begin{lem}\label{LemC}
Suppose that $\varphi \DTA \psi$. 
\begin{enumerate}
	\item If $\psi \in \E_k^+$, then $\varphi \in \E_k^+$. 
	\item If $\psi \in \U_k^+$, then $\varphi \in \U_k^+$. 
\end{enumerate}
\end{lem}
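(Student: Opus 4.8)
The plan is to reduce Lemma \ref{LemC} to the single-step version Lemma \ref{LemB} by induction on the length of the transformation sequence witnessing $\varphi \DTA \psi$. Since clauses 1 and 2 have identical structure, I would prove them simultaneously.

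First I would unfold the definition of $\DTA$: assuming $\varphi \DTA \psi$, fix a natural number $n$ and a sequence $\varphi_0, \ldots, \varphi_n$ of formulas with $\varphi \equiv \varphi_0$ and $\psi \equiv \varphi_n$ such that for each $i < n$, the formula $\varphi_{i+1}$ is obtained from $\varphi_i$ by replacing an occurrence of a subformula $\xi_i$ with $\delta_i$, where $\xi_i \DT \delta_i$. The induction is on $n$. For the base case $n = 0$ we have $\varphi \equiv \psi$, so $\psi \in \E_k^+$ (resp.\ $\U_k^+$) immediately gives $\varphi \in \E_k^+$ (resp.\ $\U_k^+$).

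For the inductive step I would peel off the last transformation: the initial segment $\varphi_0, \ldots, \varphi_{n-1}$ witnesses $\varphi \equiv \varphi_0 \DTA \varphi_{n-1}$, while $\varphi_n \equiv \psi$ is obtained from $\varphi_{n-1}$ by the single replacement of $\xi_{n-1}$ with $\delta_{n-1}$ satisfying $\xi_{n-1} \DT \delta_{n-1}$. Applying Lemma \ref{LemB} to this last step yields $\varphi_{n-1} \in \E_k^+$ (resp.\ $\U_k^+$) from $\psi \in \E_k^+$ (resp.\ $\U_k^+$). Then the induction hypothesis, applied to the shorter witness for $\varphi \DTA \varphi_{n-1}$, gives $\varphi \in \E_k^+$ (resp.\ $\U_k^+$), as desired.

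Since all the genuine case analysis over the connectives and quantifiers has already been carried out in Lemmas \ref{LemA} and \ref{LemB}, there is no real obstacle here: Lemma \ref{LemC} is essentially an iteration/transitivity statement. The only point requiring minor care is the bookkeeping of the witnessing sequence, namely checking that the initial segment $\varphi_0, \ldots, \varphi_{n-1}$ is itself a legitimate witness for $\varphi \DTA \varphi_{n-1}$ so that the induction hypothesis applies; this is immediate from the definition of $\DTA$.
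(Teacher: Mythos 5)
Your proof is correct and matches the paper's, which simply declares the lemma ``immediate from Lemma \ref{LemB} and the definition of $\varphi \DTA \psi$''; you have merely spelled out the underlying induction on the length of the witnessing sequence, peeling off the last single-step replacement via Lemma \ref{LemB}. The bookkeeping point you flag (that the initial segment witnesses $\varphi \DTA \varphi_{n-1}$) is indeed immediate, so nothing is missing.
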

\begin{proof}
Immediate from Lemma \ref{LemB} and the definition of $\varphi \DTA \psi$.
Note that if $\varphi \DTA \psi$ with a quantifier-free formula $\psi$, then $\varphi \equiv \psi$.
\end{proof}

\begin{thm}\label{MT2}\leavevmode
\begin{enumerate}
	\item If $\PNF(\varphi) \cap \Sigma_k^+ \neq \emptyset$, then $\varphi \in \E_k^+$. 
	\item If $\PNF(\varphi) \cap \Pi_k^+  \neq \emptyset$, then $\varphi \in \U_k^+$. 
\end{enumerate}
\end{thm}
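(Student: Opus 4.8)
The plan is to reduce Theorem \ref{MT2} to Lemma \ref{LemC} by establishing the single structural fact that every prenex formula already lies in its intended hierarchical class, namely $\Sigma_k^+ \subseteq \E_k^+$ and $\Pi_k^+ \subseteq \U_k^+$ for all $k$. Granting this, the theorem is immediate: if $\psi \in \PNF(\varphi) \cap \Sigma_k^+$, then $\varphi \DTA \psi$ and $\psi \in \Sigma_k^+ \subseteq \E_k^+$, so $\varphi \in \E_k^+$ by Lemma \ref{LemC}.1; clause 2 is the symmetric application of Lemma \ref{LemC}.2.

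The heart of the argument is thus the auxiliary claim that $\Sigma_k \subseteq \E_k^+$ and $\Pi_k \subseteq \U_k^+$, which I would prove by simultaneous induction on $k$. The base case $k=0$ is just $\Sigma_0 = \Pi_0 = \F_0 = \E_0^+ = \U_0^+$ by definition. For the step, write a $\Sigma_{k+1}$-formula as $\exists \vec{x}\,\psi_0$ with $\psi_0 \in \Pi_k$. By the induction hypothesis $\psi_0 \in \U_k^+$, and since $\U_k^+ = \U_k \cup \bigcup_{i<k}\F_i \subseteq \bigcup_{i \le k}\F_i \subseteq \E_{k+1}^+$ (using $\U_k \subseteq \F_k$, which holds by the definition of $\U_k$), we get $\psi_0 \in \E_{k+1}^+$. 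Peeling off the existential quantifiers one at a time and applying Lemma \ref{lem: basic facts on our classes}.\eqref{item: E exists} at each step then yields $\exists \vec{x}\,\psi_0 \in \E_{k+1}^+$. The inclusion $\Pi_{k+1} \subseteq \U_{k+1}^+$ is proved symmetrically, writing a $\Pi_{k+1}$-formula as $\forall \vec{x}\,\pi_0$ with $\pi_0 \in \Sigma_k \subseteq \E_k^+ \subseteq \U_{k+1}^+$ and invoking Lemma \ref{lem: basic facts on our classes}.\eqref{item: U forall}.

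From this claim the full inclusion $\Sigma_k^+ \subseteq \E_k^+$ follows by unwinding the definition $\Sigma_k^+ = \Sigma_k \cup \bigcup_{i<k}\Sigma_i \cup \bigcup_{i<k}\Pi_i$. The top piece $\Sigma_k$ is handled directly by the claim, while for $i < k$ the claim gives $\Sigma_i \subseteq \E_i^+$ and $\Pi_i \subseteq \U_i^+$, both of which sit inside $\bigcup_{j \le i}\F_j \subseteq \bigcup_{j<k}\F_j \subseteq \E_k^+$ (again using $\E_i, \U_i \subseteq \F_i$ by definition). The inclusion $\Pi_k^+ \subseteq \U_k^+$ is entirely analogous.

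I expect the only delicate point to be the bookkeeping with the cumulative definitions: specifically, verifying $\U_k^+ \subseteq \E_{k+1}^+$ and $\E_k^+ \subseteq \U_{k+1}^+$ in the inductive step, and checking that the lower-level classes $\E_i^+, \U_i^+$ with $i<k$ are all absorbed into $\bigcup_{j<k}\F_j$. These rest entirely on the definitional containments $\E_i \subseteq \F_i$ and $\U_i \subseteq \F_i$ together with $\bigcup_{i<k}\F_i \subseteq \E_k^+ \cap \U_k^+$, so no computation with alternation paths $\alt{\cdot}$ or degrees is needed; the inductive engine driving the prenex blocks is supplied wholesale by Lemma \ref{lem: basic facts on our classes}.
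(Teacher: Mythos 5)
Your proposal is correct and takes essentially the same route as the paper: both reduce the theorem to Lemma \ref{LemC} via the inclusions $\Sigma_k^+ \subseteq \E_k^+$ and $\Pi_k^+ \subseteq \U_k^+$. The only difference is that the paper asserts these inclusions without comment (writing $\psi \in \Sigma_k^+ \subseteq \E_k^+$ directly), whereas you verify them by a careful simultaneous induction on $k$ using Lemma \ref{lem: basic facts on our classes}, which is a sound filling-in of a step the paper treats as immediate.
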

\begin{proof}
1. Suppose $\psi \in \PNF(\varphi) \cap \Sigma_k^+$. 
Then, $\varphi \DTA \psi$ and $\psi \in \Sigma_k^+ \subseteq \E_k^+$. 
By Lemma \ref{LemC}.(1), we obtain $\varphi \in \E_k^+$. 

2 is proved in a similar way. 
\end{proof}

By Theorems \ref{MT1} and \ref{MT2}, we obtain characterizations of classes $\E_k^+$, $\U_k^+$, $\F_k^+$, $\E_k$, $\U_k$, $\PF_k$ in terms of the prenex normalization procedure as follows:

\begin{thm}[Main Theorem]\label{thm: main results}
\leavevmode
\begin{enumerate}
	\item
 \label{item: Ek+}
 $\varphi \in \E_k^+$ if and only if $\PNF(\varphi) \cap \Sigma_k^+  \neq \emptyset$.
	\item
  \label{item: Uk+}
  $\varphi \in \U_k^+$ if and only if $\PNF(\varphi) \cap \Pi_k^+  \neq \emptyset$.
 \item
 \label{item: Fk+}
 $\varphi \in \F_k^+$ if and only if $\PNF(\varphi) \cap \Sigma_{k+1}^+  \neq \emptyset$ and $\PNF(\varphi) \cap \Pi_{k+1}^+  \neq \emptyset$.
 \item
 \label{item: Ek}
 $\varphi \in \E_{k+1}$ if and only if $\PNF(\varphi) \cap \Sigma_{k+1}^+  \neq \emptyset$ and $\PNF(\varphi) \cap \Pi_{k+1}^+  = \emptyset$, equivalently, $\PNF(\varphi) \cap \Sigma_{k+1}  \neq \emptyset$ and $\PNF(\varphi) \cap \Pi_{k+1}^+  = \emptyset$.
 \item
 \label{item: Uk}
 $\varphi \in \U_{k+1}$ if and only if $\PNF(\varphi) \cap \Pi_{k+1}^+  \neq \emptyset$ and $\PNF(\varphi) \cap \Sigma_{k+1}^+  = \emptyset$, equivalently, $\PNF(\varphi) \cap \Pi_{k+1}  \neq \emptyset$ and $\PNF(\varphi) \cap \Sigma_{k+1}^+  = \emptyset$.
\item
 \label{item: PFk}
 $\varphi \in \PF_k$ if and only if $\PNF(\varphi) \cap \Sigma_{k+1}^+  \neq \emptyset$, $\PNF(\varphi) \cap \Pi_{k+1}^+  \neq \emptyset$ and $\PNF(\varphi) \cap (\Sigma_k^+ \cup \Pi_k^+)  = \emptyset$, equivalently, $\PNF(\varphi) \cap \Sigma_{k+1}  \neq \emptyset$, $\PNF(\varphi) \cap \Pi_{k+1}  \neq \emptyset$ and $\PNF(\varphi) \cap (\Sigma_k^+ \cup \Pi_k^+)  = \emptyset$.
\end{enumerate}
\end{thm}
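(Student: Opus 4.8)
The plan is to derive all six equivalences from the two halves already in hand, namely Theorem \ref{MT1} and Theorem \ref{MT2}, together with the purely set-theoretic identities among the classes recorded in Lemma \ref{lem: Classification}. First, clauses \eqref{item: Ek+} and \eqref{item: Uk+} are nothing but the conjunction of Theorem \ref{MT1} (the forward direction) and Theorem \ref{MT2} (the backward direction). For clause \eqref{item: Fk+}, I would invoke Lemma \ref{lem: Classification}.\eqref{item: Fk+=}, which gives $\F_k^+ = \E_{k+1}^+ \cap \U_{k+1}^+$; applying clauses \eqref{item: Ek+} and \eqref{item: Uk+} at index $k+1$ then immediately yields that $\varphi \in \F_k^+$ iff $\PNF(\varphi)\cap\Sigma_{k+1}^+\neq\emptyset$ and $\PNF(\varphi)\cap\Pi_{k+1}^+\neq\emptyset$.

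Next, for clauses \eqref{item: Ek}, \eqref{item: Uk} and \eqref{item: PFk} I would first reduce each genuine (non-cumulative) class to a Boolean combination of cumulative classes. From Lemma \ref{lem: Classification}.\eqref{item: Ek+1 cap Uk+1+ = empty} together with the definitions one has $\E_{k+1} = \E_{k+1}^+\setminus\F_k^+$, $\U_{k+1}=\U_{k+1}^+\setminus\F_k^+$, and $\PF_k = \F_k^+\setminus(\E_k^+\cup\U_k^+)$; the last uses $\F_k^+ = \PF_k\cup\E_k^+\cup\U_k^+$ from Lemma \ref{lem: Classification}.\eqref{item: Fk+=} together with the fact that $\PF_k\subseteq\F_k$ has degree exactly $k$ whereas $\F_{k-1}^+$ has degree below $k$, so that $\PF_k$ is disjoint from $\E_k^+\cup\U_k^+$. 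Substituting the characterizations from clauses \eqref{item: Ek+}--\eqref{item: Fk+} into these identities and simplifying the resulting propositional combination of emptiness statements yields the first (``$+$'') formulation in each case; for instance, the condition $\varphi\notin\F_k^+$ negates the conjunction from clause \eqref{item: Fk+}, and once conjoined with $\PNF(\varphi)\cap\Sigma_{k+1}^+\neq\emptyset$ it collapses to ``$\PNF(\varphi)\cap\Sigma_{k+1}^+\neq\emptyset$ and $\PNF(\varphi)\cap\Pi_{k+1}^+=\emptyset$''.

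The only genuinely new ingredient is the passage from the ``$+$'' formulations to the ``equivalently'' formulations phrased with the non-cumulative $\Sigma_{k+1}$ and $\Pi_{k+1}$. Here I would record the elementary inclusion, immediate from the definition of the cumulative classes, that $\Sigma_{k+1}^+\setminus\Sigma_{k+1}\subseteq\bigcup_{i\le k}(\Sigma_i\cup\Pi_i)$, and that $\bigcup_{i\le k}(\Sigma_i\cup\Pi_i)$ is contained both in $\Pi_{k+1}^+$ and in $\Sigma_k^+\cup\Pi_k^+$ (symmetrically for $\Pi_{k+1}^+\setminus\Pi_{k+1}$). Consequently, under the relevant emptiness hypothesis, any witness $\psi\in\PNF(\varphi)\cap\Sigma_{k+1}^+$ that failed to lie in $\Sigma_{k+1}$ would lie in a class that the hypothesis declares to meet $\PNF(\varphi)$ trivially: in $\Pi_{k+1}^+$ for clause \eqref{item: Ek} and in $\Sigma_k^+\cup\Pi_k^+$ for clause \eqref{item: PFk}, a contradiction; hence $\psi$ is already in $\Sigma_{k+1}$, and symmetrically on the $\Pi$-side. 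The converse directions of these reformulations are trivial from $\Sigma_{k+1}\subseteq\Sigma_{k+1}^+$ and $\Pi_{k+1}\subseteq\Pi_{k+1}^+$.

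I expect this last upgrade to be the crux. Because $\DTA$ preserves the number of quantifiers by Proposition \ref{prop: basic properties of DTA}.\eqref{item: DTA-NQ}, one cannot simply pad a low-complexity prenex witness with dummy quantifiers to push it into $\Sigma_{k+1}$; so the replacement of a $\Sigma_{k+1}^+$-witness by a genuine $\Sigma_{k+1}$-witness cannot be performed by manipulating the witness itself and must instead be forced indirectly by the emptiness hypotheses through the inclusions above. Once this observation is isolated, the six clauses follow by routine bookkeeping.
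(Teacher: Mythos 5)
Your proposal is correct and takes essentially the same route as the paper's proof: clauses \eqref{item: Ek+} and \eqref{item: Uk+} are obtained by combining Theorems \ref{MT1} and \ref{MT2}, clauses \eqref{item: Fk+}--\eqref{item: PFk} by the identities of Lemma \ref{lem: Classification}, and the passage to the non-cumulative formulations by the set identities $\Sigma_{k+1}^+\setminus\Pi_{k+1}^+=\Sigma_{k+1}$ and $\Sigma_{k+1}^+\setminus(\Sigma_k^+\cup\Pi_k^+)=\Sigma_{k+1}$ (and their $\Pi$-duals), which is exactly what your witness-relocation argument via $\Sigma_{k+1}^+\setminus\Sigma_{k+1}\subseteq\bigcup_{i\le k}(\Sigma_i\cup\Pi_i)$ amounts to. The only cosmetic difference is that you reduce $\E_{k+1}$ to $\E_{k+1}^+\setminus\F_k^+$ and negate the conjunction of clause \eqref{item: Fk+}, whereas the paper passes through the same step and then to $\varphi\notin\U_{k+1}^+$ via Lemma \ref{lem: Classification}.\eqref{item: Ek+1 cap Uk+1+ = empty}; the two bookkeepings are equivalent.
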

\begin{proof}
Clauses \eqref{item: Ek+} and \eqref{item: Uk+} are immediate from Theorems \ref{MT1} and \ref{MT2}.

\medskip
Clause \eqref{item: Fk+} follows from clauses \eqref{item: Ek+} and \eqref{item: Uk+} since $\F_k^+ = \E_{k+1}^+ \cap \U_{k+1}^+$ (cf.~Lemma \ref{lem: Classification}.\eqref{item: Fk+=}).

\medskip
\eqref{item: Ek}:
By Lemma \ref{lem: Classification}.\eqref{item: Ek+1 cap Uk+1+ = empty}, we have that $\vp \in \E_{k+1}$ if and only if $\vp \in \E_{k+1}^+$ and $\vp\notin \F_k^+$ if and only if $\vp \in \E_{k+1}^+$ and $\vp\notin \U_{k+1}^+$.
Then, by clauses \eqref{item: Ek+} and \eqref{item: Uk+}, we have that  $\vp \in \E_{k+1}$ if and only if  $\PNF(\varphi) \cap \Sigma_{k+1}^+  \neq \emptyset$ and $\PNF(\varphi) \cap \Pi_{k+1}^+ =\emptyset$.
The latter is equivalent to that $\PNF(\varphi) \cap \Sigma_{k+1}  \neq \emptyset$ and $\PNF(\varphi) \cap \Pi_{k+1}^+  = \emptyset$ since $\Sigma_{k+1}^+ \setminus \Pi_{k+1}^+ =\Sigma_{k+1}$.

\medskip
Clause \eqref{item: Uk} is proved similarly to  \eqref{item: Ek}.

\medskip
Clause \eqref{item: PFk} is immediate from clauses \eqref{item: Fk+}, \eqref{item: Ek+} and \eqref{item: Uk+} since $\PF_k = \F_k^+ \setminus (\E_k^+ \cup \U_k^+)$ (cf.~Lemma \ref{lem: Classification}.\eqref{item: Fk+=}).
The last equivalence is trivial since $\Sigma_{k+1}^+ \setminus (\Sigma_k^+ \cup \Pi_k^+) = \Sigma_{k+1}$ and $\Pi_{k+1}^+ \setminus (\Sigma_k^+ \cup \Pi_k^+) = \Pi_{k+1}$.
\end{proof}

\begin{remark}
In Theorem \ref{thm: main results}, the characterizations for $\E_0$ and $\U_0$ are contained not in clauses \eqref{item: Ek} and \eqref{item: Uk} but in clauses \eqref{item: Ek+} and \eqref{item: Uk+} respectively since $\E_0=\E_0^+$ and $\U_0=\U_0^+$.
In addition, clause \eqref{item: PFk} does not hold for $\PF_0$ if one defines $\PF_0$ as the class of quantifier-free formulas as in \cite{ABHK04}.
\end{remark}

\section{Summary}
Theorem \ref{thm: main results}, which is our main theorem, reveals the following:
\begin{itemize}
\item
A formula is in $\E_k^+$ (resp.~$\U_k^+$) if and only if it can be transformed into a formula in $\Sigma_k^+$ (resp.~$\Pi_k^+$) with respect to $\DTA$.
\item
A formula is in $\F_k^+$ if and only if it can be transformed into a formula in $\Sigma_{k+1}^+$ and also into a formula in $\Pi_{k+1}^+$ with respect to $\DTA$.
\item
A formula is in $\E_{k+1}$ (resp.~$\U_{k+1}$) if and only if it can be transformed into a formula in $\Sigma_{k+1}$ (resp.~$\Pi_{k+1}$) but cannot be so for $\Pi_{k+1}^+$ (resp.~$\Sigma_{k+1}^+$) with respect to $\DTA$.
\item
A formula is in $\PF_{k}$ if and only if it can be transformed into a formula in $\Sigma_{k+1}$ and also into  a formula in $\Pi_{k+1}$ (resp.~$\Pi_{k+1}$) but cannot be so for $\Sigma_{k}^+ \cup \Pi_{k}^+$ (resp.~$\Sigma_{k+1}^+$) with respect to $\DTA$.
\end{itemize}
By this observation, the classification of formulas into $\E_k$, $\U_k$ and $\PF_k$ can be visualized as Figure \ref{fig:my_label}.

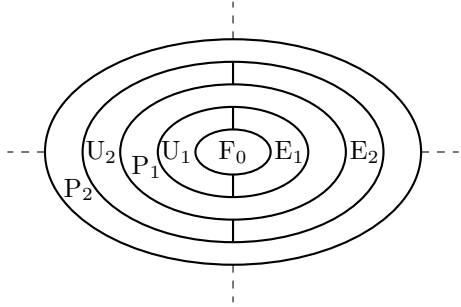
\begin{figure}[ht]
\centering
\begin{tikzpicture}
\node at (0,0) {$\F_0$};
\node at (-0.75,0) {$\U_1$};
\node at (0.75,0) {$\E_1$};
\node at (-1.15, -0.2) {$\PF_1$};
\node at (-1.75,0) {$\U_2$};
\node at (1.75,0) {$\E_2$};
\node at (-2.05, -0.5) {$\PF_2$};

\draw [thick] (0,0)circle [x radius=0.5,y radius=0.3];
\draw [thick] (0,0)circle [x radius=1,y radius=0.6];
\draw [thick] (0,0)circle [x radius=1.5,y radius=0.9];
\draw [thick] (0,0)circle [x radius=2,y radius=1.2];
\draw [thick] (0,0)circle [x radius=2.5,y radius=1.5];

\draw [thick] (0,0.3)--(0,0.6);
\draw [thick] (0,-0.3)--(0,-0.6);
\draw [thick] (0,0.9)--(0,1.2);
\draw [thick] (0,-0.9)--(0,-1.2);
\draw [dashed] (0,1.5)--(0,2);
\draw [dashed] (0,-1.5)--(0,-2);
\draw [dashed] (2.5,0)--(3.0,0);
\draw [dashed] (-2.5,0)--(-3.0,0);
\end{tikzpicture}
\caption{Hierarchical classification of formulas with respect to the prenex normalization}
    \label{fig:my_label}
\end{figure}

The difference between our Figure \ref{fig:my_label} and \cite[Figure 1]{ABHK04} is only in the position of $\PF_{k+1}$.
Our Figure \ref{fig:my_label} represents that
formulas in $\PF_k$ are outside of $\E_{k}^+ \cup \U_{k}^+$ which reflects the fact that a formula in $\PF_k$ cannot be transformed into a formula in  $\E_{k}^+ \cup \U_{k}^+$.
Since $\mathrm{C}_{k+1}:= \PF_k \cup \E_{k+1} \cup \U_{k+1}$ is the class of formulas which can be transformed into a formula in $\Sigma_{k+1} \cup \Pi_{k+1}$ but not so for  $\Sigma_{k} \cup \Pi_{k}$, one may think of $\mathrm{C}_{k+1}$ as the class of ``prenex degree'' $k+1$, which is based on the degree of prenex formulas into which the formula in question can be transformed with respect to $\DTA$.

\section*{Acknowledgements}
The authors thank Ulrich Kohlenbach for pointing them out that the definition of $\PF_0$ in \cite{ABHK04} is different from that in its preprint version, to which the authors referred in the previous version of this paper.
They also thank Danko Ilik for providing some information about related works.
The first author was supported by JSPS KAKENHI Grant Numbers JP19J01239, JP20K14354 and JP23K03205, and the second author by JP19K14586 and JP23K03200.

\bibliographystyle{plain}
\bibliography{sn-bibliography}

\end{document}